\documentclass[10pt]{amsart}
\usepackage[a4paper,marginratio=1:1]{geometry}
\usepackage[initials,non-sorted-cites]{amsrefs}
\usepackage{mybibspec}
\usepackage{tensor,braket,bm,fouridx}

\newtheorem{thm}{Theorem}[section]
\newtheorem{prop}[thm]{Proposition}
\newtheorem{lem}[thm]{Lemma}

\theoremstyle{definition}
\newtheorem{dfn}[thm]{Definition}
\theoremstyle{remark}

\newcommand{\abs}[1]{\lvert#1\rvert}
\newcommand{\norm}[1]{\|#1\|}

\newcommand{\conj}[1]{\smash{\overline{#1}}}
\newcommand{\bdry}{\partial}
\newcommand{\partialbar}{{\smash{\conj{\partial}}}}
\newcommand{\spacewedge}{{\mathord{\wedge}}}
\DeclareMathOperator{\Ric}{Ric}
\DeclareMathOperator{\dom}{dom}
\DeclareMathOperator{\im}{im}
\DeclareMathOperator{\supp}{supp}
\DeclareMathOperator{\tr}{tr}

\let\div\relax
\DeclareMathOperator{\div}{div}
\DeclareMathOperator{\re}{Re}

\DeclareMathOperator{\rank}{rank}
\DeclareMathOperator{\Diff}{Diff}
\DeclareMathOperator{\dist}{dist}
\DeclareMathOperator{\Span}{span}
\DeclareMathOperator{\Hom}{Hom}
\DeclareMathOperator{\End}{End}

\newcommand{\PSU}{\mathit{PSU}}
\newcommand{\Sp}{\mathit{Sp}}

\numberwithin{equation}{section}

\title[Deformation of Einstein metrics and $L^2$ cohomology]{Deformation of Einstein metrics and $L^2$ cohomology on strictly pseudoconvex domains}
\author{Yoshihiko Matsumoto}
\thanks{Partially supported by Grant-in-Aid for JSPS Fellows (14J11754).}
\address{Department of Mathematics, Graduate School of Science, Osaka University,
	1-1 Machikaneyama-cho, Toyonaka, Osaka 560-0043, Japan}
\email{matsumoto@math.sci.osaka-u.ac.jp}
\subjclass[2010]{Primary 53C25; Secondary 32L20, 32Q20, 32T15, 32V05.}
\keywords{Complete Einstein metrics; $L^2$ cohomology; asymptotically complex hyperbolic metrics;
partially integrable CR structures}

\begin{document}

\begin{abstract}
	We construct new complete Einstein metrics on smoothly bounded strictly pseudoconvex domains
	in Stein manifolds.
	This is done by deforming the K\"ahler-Einstein metric of Cheng and Yau,
	the approach that generalizes the works of Roth and Biquard
	on the deformations of the complex hyperbolic metric on the unit ball.
	Recasting the problem into the question of vanishing of an $L^2$ cohomology
	and taking advantage of the asymptotic complex hyperbolicity of the Cheng--Yau metric,
	we establish the possibility of such a deformation when the dimension of the domain is
	larger than or equal to three.
\end{abstract}

\maketitle

\section*{Introduction}

Let $\Omega$ be a smoothly bounded strictly pseudoconvex domain in a Stein manifold
of dimension $n\ge 2$.
It is shown by Cheng and Yau \cite{Cheng-Yau-80} that then
$\Omega$ carries a complete K\"ahler-Einstein metric $g$
with negative scalar curvature, which is unique up to homothety.
This metric has attracted much interest in connection with CR geometry of the boundary $\bdry\Omega$.
Actually, the natural CR structure on $\bdry\Omega$, which is called the \emph{conformal infinity} of $g$
in our context, locally and asymptotically determines the metric $g$ up to a certain high order,
as first pointed out by Fefferman \cite{Fefferman-76} and further investigated by Graham \cite{Graham-87}.
It is a version of bulk-boundary correspondence, which is more extensively studied in the setting of
asymptotically \emph{real} hyperbolic metrics
(and in that of asymptotically anti-de Sitter metrics in physical context).

One can further generalize this complex bulk-boundary correspondence
toward so-called asymptotically complex hyperbolic (ACH) Einstein metrics and CR structures
that are not necessarily integrable,
which is the idea of Roth \cite{Roth-99-Thesis} and Biquard \cite{Biquard-00}.
Those that are admitted as conformal infinities in the new setting
are called partially integrable CR structures (or partially integrable almost CR structures) in the literature
(see, e.g., \cites{Cap-Schichl-00,Cap-Slovak-09,Matsumoto-16}).

What is shown in \cite{Roth-99-Thesis} and \cite{Biquard-00}
is the perturbative existence and uniqueness result on the ball
(the existence is treated by both, and the uniqueness is due to \cite{Biquard-00}):
For any partially integrable CR structure $J$ on the sphere $S^{2n-1}$ sufficiently close to the standard one,
there exists an Einstein ACH metric on the unit ball $B^n\subset\mathbb{C}^n$
close to the complex hyperbolic metric, which is ``locally unique'' up to diffeomorphism action,
whose conformal infinity is $J$.
This result is parallel to that of Graham and Lee~\cite{Graham-Lee-91} for asymptotically real hyperbolic (AH)
Einstein metrics.

In this paper, we take up the same perturbation problem on an \emph{arbitrary} bounded strictly
pseudoconvex domain $\Omega$.
Our seed metric is the Cheng--Yau metric, and by the \emph{natural} CR structure on $\bdry\Omega$
we mean the one induced by the complex structure of the ambient Stein manifold.
Then we can establish the following generalization of
the result of \cite{Roth-99-Thesis} and \cite{Biquard-00} provided the dimension $n$ is at least three.

\begin{thm}
	\label{thm:main}
	Let $\Omega$ be a smoothly bounded strictly pseudoconvex domain in a Stein manifold of dimension $n\ge 3$.
	Suppose that $J$ is a partially integrable CR structure on the boundary $\bdry\Omega$
	sufficiently close to the natural CR structure in the $C^{2,\alpha}$ topology.
	Then there exists an Einstein ACH metric $g$ on $\Omega$ with conformal infinity $J$.
	The metric $g$ is locally unique modulo the action of diffeomorphisms on $\Omega$
	inducing the identity on $\bdry\Omega$.
\end{thm}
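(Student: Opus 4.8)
The plan is to recast the Einstein equation as a nonlinear elliptic PDE on the fixed background $\Omega$ equipped with the Cheng--Yau metric $g_0$, and to solve it by the implicit function theorem in suitable weighted H\"older spaces adapted to the ACH geometry. The first step is to set up the deformation: given a partially integrable CR structure $J$ near the standard one, construct an \emph{approximate solution} $\bar g_J$ — an ACH metric on $\Omega$ whose conformal infinity is $J$ and which agrees with $g_0$ to high order at $\bdry\Omega$ when $J$ is the standard structure — depending smoothly on $J$ with $\bar g_{J_0}=g_0$. This is the ACH analogue of the model metrics used by Graham--Lee and Biquard; near the boundary one writes the ACH model attached to $J$ in a collar and glues it to $g_0$ in the interior. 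The error $\Ric(\bar g_J)+\tfrac{n+1}{2}\,\bar g_J$ is then small, and decays at the boundary, when $J$ is close to $J_0$.

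The second step is to break the diffeomorphism invariance. Following the Bianchi/DeTurck gauge, instead of solving $\Ric(g)+\tfrac{n+1}{2}g=0$ directly one solves the gauged equation $Q(g):=\Ric(g)+\tfrac{n+1}{2}g+\delta_g^*\bigl(\beta_{\bar g_J}(g)\bigr)=0$, where $\beta_{\bar g_J}$ is the Bianchi operator of the reference metric; a solution of this with sufficiently fast boundary decay is genuinely Einstein because the Bianchi identity forces the gauge term to vanish. Linearizing at $\bar g_J$ gives an operator of the form $L=\tfrac12\nabla^*\nabla-\mathring R+\cdots$ acting on symmetric $2$-tensors, and the substance of the argument is to show that $L$ is an isomorphism between the appropriate weighted spaces $C^{k+2,\alpha}_\delta\to C^{k,\alpha}_\delta$ for $\delta$ in a suitable range.

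The third step — and this is where the hypothesis $n\ge 3$ and the $L^2$ cohomology of the title enter — is the analysis of $L$. Self-adjointness and Fredholm theory on the ACH manifold reduce invertibility to the absence of $L^2$ kernel at the relevant weight, i.e. to the vanishing of an $L^2$ cohomology group (Einstein deformations that are transverse-traceless and $L^2$). For the Cheng--Yau (asymptotically complex hyperbolic) seed metric, one invokes the asymptotic complex hyperbolicity to identify the indicial roots and to locate the spectrum, and then a Bochner--Weitzenb\"ock argument, together with the K\"ahler-Einstein structure and curvature pinching of $g_0$ near $\bdry\Omega$, kills the $L^2$ cohomology when $n\ge 3$; in real dimension $4$ (i.e. $n=2$) the critical weight sits at the bottom of the continuous spectrum and this is exactly where the method fails, which is why the theorem excludes $n=2$. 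I expect this spectral/vanishing step to be the main obstacle: everything else is a fairly standard implicit-function-theorem package once the function spaces are fixed, but controlling the indicial roots of $L$ and proving the $L^2$ vanishing for an arbitrary strictly pseudoconvex $\Omega$ — where $g_0$ is only \emph{asymptotically} complex hyperbolic rather than exactly so — requires the careful global analysis that forms the technical heart of the paper.

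Once $L$ is an isomorphism, the implicit function theorem applied to $Q$ near $(\bar g_{J_0},J_0)=(g_0,J_0)$ yields, for each $J$ close to $J_0$ in $C^{2,\alpha}$, a unique nearby solution $g=g(J)$ of the gauged equation; the boundary decay built into the weighted spaces guarantees it is Einstein with conformal infinity $J$, and the local uniqueness modulo boundary-fixing diffeomorphisms follows from the uniqueness clause of the implicit function theorem together with the fact that the gauge exhausts the relevant diffeomorphism orbit. Finally one records that the regularity of $g$ near $\bdry\Omega$ is controlled by polyhomogeneous expansions, so that $g$ is a bona fide ACH metric in the sense fixed earlier in the paper.
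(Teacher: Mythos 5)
Your overall framework---an approximate ACH extension of $J$, the Bianchi/DeTurck gauge, the inverse function theorem, and the reduction of invertibility of the linearization to the vanishing of its $L^2$ kernel via Fredholm theory at the appropriate weight---coincides with the paper's (and Biquard's) setup, and that part of the proposal is sound. The genuine gap is exactly the step you yourself flag as the main obstacle: you propose to kill the $L^2$ kernel by ``a Bochner--Weitzenb\"ock argument together with curvature pinching of $g_0$ near $\bdry\Omega$,'' but this cannot work as stated. The Cheng--Yau metric of a general strictly pseudoconvex domain is only \emph{asymptotically} complex hyperbolic; its sectional curvature in the interior is not controlled and may have the wrong sign, so no global Bochner inequality is available, and curvature pinching in a collar alone says nothing about an $L^2$ harmonic tensor supported throughout $\Omega$. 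What is missing is a mechanism that localizes the vanishing problem to a neighborhood of the boundary.

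The paper supplies three ingredients you do not mention. First, Koiso's observation: for a K\"ahler--Einstein metric, $\mathcal{E}_g'$ preserves the hermitian/anti-hermitian splitting of symmetric $2$-tensors; the hermitian part is governed by $\tfrac12(\Delta_d-2\lambda)$ with $\lambda<0$ (so its $L^2$ kernel vanishes), and the anti-hermitian part is identified with $\tfrac12\Delta_{\partialbar}$ on $T^{1,0}$-valued $(0,1)$-forms, giving $\ker_{L^2}\mathcal{E}_g'\cong L^2\mathcal{H}^{0,1}(\Omega;T^{1,0})$. Second, the localization: Ohsawa's long exact sequence relating $H^{p,q}_c$, $L^2H^{p,q}$ and $\varinjlim_K L^2H^{p,q}(\Omega\setminus K)$, together with the Andreotti--Vesentini vanishing $H^{0,1}_c(\Omega;E)=0$ on Stein manifolds (this is where the Stein hypothesis actually enters), reduces everything to the $L^2$ cohomology of a collar $\mathcal{U}_\rho$ where the curvature \emph{is} pinched. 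Third, on that collar the boundary $M_\rho$ is pseudoconcave from inside, so one needs a H\"ormander condition-$Z(q)$ type estimate interpolating between the Morrey--Kohn--H\"ormander and Bochner--Kodaira--Nakano identities; the resulting coefficient $2n-3q-3$ is positive for $q=1$ only when $n\ge4$, and for $n=3$ one must additionally invoke the \emph{a priori} decay of elements of $\ker_{L^2}\mathcal{E}_g'$ (indicial radius $n$) to introduce a weight $\varphi^{-\delta}$ and gain the term $(n-q-1)\delta\norm{\alpha}^2$. Relatedly, your diagnosis of the failure at $n=2$ (critical weight at the bottom of the continuous spectrum) is not the actual reason: the weighted boundary estimate simply loses positivity there because the weight's coefficient $n-q-1$ vanishes for $q=1$.
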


Here we are implicitly assuming that the underlying contact structure of $J$ remains to be the natural one,
so that the $C^{2,\alpha}$-closeness between $J$ and the natural CR structure is well-defined.
As contact structures on closed manifolds are rigid (see Gray~\cite{Gray-59}), no generality is lost by this
assumption.

The uniqueness is precisely stated as follows using the weighted H\"older space
(see Section \ref{subsec:ACH-metrics} for details):
If $g$ is our metric and $\Hat{g}$ is another Einstein ACH metric
such that $\Hat{g}-g\in C^{2,\alpha}_\delta(S^2T^*\Omega)$ for some $\delta>0$
(which in particular implies that their conformal infinities are the same),
then there exists a diffeomorphism $\Phi\in\Diff(\Omega)$ that is at least continuous up to the boundary
and restricts to the identity on $\bdry\Omega$ for which $\Phi^*\Hat{g}=g$.
This uniqueness part is not a new result---it is discussed in \cite{Biquard-00}*{Proposition I.4.6}.

The framework of our proof of the existence is standard.
Namely, we apply the inverse function theorem to a mapping between Banach spaces considered in \cite{Biquard-00}.
Then the problem becomes to see if the linearization of the mapping in question is isomorphic.
This is relatively easy for the complex hyperbolic metric on the unit ball;
hence the result of \cite{Roth-99-Thesis} and \cite{Biquard-00}.
However, for general domains, it is far less obvious. Here lies the main issue of our discussion.

Let us get into some more details.
We consider the following ``Bianchi-gauged'' Einstein equation
(we need a gauge-fixing condition in order to get rid of the diffeomorphism invariance of the Einstein equation):
\begin{equation}
	\label{eq:gauged-Einstein-equation}
	\mathcal{E}_g(h):=\Ric(g+h)-\lambda(g+h)+\delta_{g+h}^*\mathcal{B}_g(h)=0,
	\qquad \mathcal{B}_g(h):=\delta_gh+\frac{1}{2}d\tr_gh.
\end{equation}
Here $g$ denotes some ACH metric, $h$ is a symmetric 2-tensor that is ``small'' compared to $g$
(so that $g+h$ remains to be a metric in particular),
$\delta_g$ is the divergence with respect to $g$, and $\delta_{g+h}^*$ is the formal adjoint
of $\delta_{g+h}$ with respect to $g+h$.
We need to show that the linearization $\mathcal{E}_g'$ of
\eqref{eq:gauged-Einstein-equation} associated to the Cheng--Yau metric
at $h=0$ is an isomorphism between certain weighted H\"older spaces of symmetric 2-tensors.
The upshot of the discussions of Roth and Biquard (see also Lee~\cite{Lee-06} for the AH case)
is that it follows once the vanishing of the $L^2$ kernel of $\mathcal{E}_g'$ is established.
Therefore, the $L^2$ kernel, denoted by $\ker_{(2)}\mathcal{E}_g'$,
is called the \emph{obstruction space} of Einstein deformation.

The operator $\mathcal{E}_g'$, the \emph{linearized gauged Einstein operator},
for an Einstein metric $g$ turns out to be the following,
where $\mathring{R}$ denotes the usual action of the curvature tensor on symmetric 2-tensors:
\begin{equation}
	\label{eq:linearized-Einstein-formula}
	\mathcal{E}_g'=\frac{1}{2}(\nabla^*\nabla-2\mathring{R}).
\end{equation}
This expression shows, in particular, that the $L^2$ kernel vanishes when $g$ has negative sectional curvature
everywhere (thus we get the result for the unit ball).
The problem is that we do not know if a general Cheng--Yau metric $g$ satisfies the condition $K_g<0$.
Nevertheless, at infinity any Cheng--Yau metric is asymptotic to the complex hyperbolic metric,
and this suggests that we can overcome the difficulty by reducing it to an analysis \emph{near infinity}
in a suitable way.

Our crucial idea is to use Koiso's observation~\cite{Koiso-83} to recast the problem in terms of
an $L^2$ cohomology.
Note first that any symmetric 2-tensor $\sigma$ can be decomposed into the sum of
Hermitian and anti-Hermitian parts: $\sigma=\sigma_H+\sigma_A$.
For a K\"ahler-Einstein metric $g$, this decomposition is respected by $\mathcal{E}_g'$,
as can be seen from \eqref{eq:linearized-Einstein-formula}.
Now we can identify $\sigma_A$ with a $(0,1)$-form $\alpha$ with values in the
holomorphic tangent bundle $T^{1,0}$ by the duality induced by the metric,
and under this identification, it turns out that $\mathcal{E}_g'\sigma_A$ corresponds to
$\frac{1}{2}\Delta_\partialbar\alpha$.
Then, a little bit of further consideration leads to the conclusion that the vanishing
of the obstruction space $\ker_{(2)}\mathcal{E}_g'$ follows from
\begin{equation}
	\label{eq:vanishing-harmonic}
	\mathcal{H}_{(2)}^{0,1}(\Omega;T^{1,0})=0,
\end{equation}
the vanishing of the space of $L^2$ harmonic $T^{1,0}$-valued
$(0,1)$-forms on $\Omega$ with respect to the Cheng--Yau metric.
By the de Rham--Hodge--Kodaira decomposition on noncompact manifolds, there is an isomorphism
\begin{equation*}
	\mathcal{H}_{(2)}^{0,1}(\Omega;T^{1,0})\cong H^{0,1}_{(2), \mathrm{red}}(\Omega;T^{1,0}),
\end{equation*}
where the right-hand side is the so-called \emph{reduced} $L^2$ cohomology.
Although $H^{0,1}_{(2), \mathrm{red}}(\Omega;T^{1,0})$ and
the usual $L^2$ cohomology $H_{(2)}^{0,1}(\Omega;T^{1,0})$ can be different in general,
it is a trivial fact that if $H_{(2)}^{0,1}$ vanishes then so does $H_{(2), \mathrm{red}}^{0,1}$.
Thus the vanishing of the $L^2$ cohomology $H_{(2)}^{0,1}$ implies that of $\mathcal{H}_{(2)}^{0,1}$.

A virtue of this interpretation of the problem using cohomology is that
we can make use of a certain long exact sequence (see Ohsawa \cite{Ohsawa-91})
that, combined with a classical result in several complex variables, makes it sufficient to show that
the $L^2$ cohomology \emph{on a neighborhood of the boundary} vanishes.
Then this final form of the problem is solved by methods in the classical $\partialbar$-Neumann problem,
namely by the Morrey--Kohn--H\"ormander equality
and a technique of H\"ormander related to his ``condition $Z(q)$,'' when $n\ge 4$.

We need to modify our argument a little when $n=3$:
The vanishing of $\ker_{(2)}\mathcal{E}_g'$ should be reduced to
that of some \emph{weighted} $L^2$ cohomology instead of the usual $L^2$ cohomology.
This is possible by using a result of \cite{Biquard-00} again,
which states that the elements of $\ker_{(2)}\mathcal{E}_g'$ on a general ACH-Einstein manifold
actually satisfy a stronger decay property at infinity than just being $L^2$.
The introduction of the weight improves our estimate enough to show the desired vanishing result.
Unfortunately, the case $n=2$ cannot be settled even if we use this additional technique.
This case remains unsolved so far.

The argument outlined above is presented in detail in the following way.
In Section 1, we summarize the definition of ACH metrics and the associated weighted H\"older spaces.
After that the Einstein deformation theory of Roth and Biquard is recalled,
and the improved decay for the elements of $\ker_{(2)}\mathcal{E}_g'$ is also explained here.
In Section 2, we review the $L^2$ cohomology on noncompact Hermitian manifolds,
and the problem is reduced to the vanishing of the $L^2$ cohomology near the boundary.
In Section 3, we establish the necessary estimate and complete the proof of Theorem~\ref{thm:main}.

We include an appendix, in which we give yet another proof of a result of
Donnelly and Fefferman~\cite{Donnelly-Fefferman-83} on $L^2$ harmonic scalar-valued differential forms
using the improved decay technique.
In fact, via this technique, the vanishing part of the Donnelly--Fefferman theorem
boils down to the vanishing of some weighted $L^2$ cohomologies,
which we can prove by the standard Bochner--Kodaira--Nakano formula.

Most of this work was carried out during the author's stay at \'Ecole normale sup\'erieure at Paris
in 2014--15. I wish to thank Olivier Biquard for hosting the visit and for giving a lot of insightful advice,
which substantially improved my comprehension of geometric analysis of ACH metrics.
I would also like to express my gratitude to the staff of ENS and Tokyo Institute of Technology,
especially to Lara Morise and Akiko Takagi, for various practical help during the stay.
I was benefited from discussions with Masanori Adachi and Tomoyuki Hisamoto in an early stage of
the project. Gilles Carron suggested the idea of using the approach employed for the $\conj{\partial}$-Neumann
problem; on this point Takeo Ohsawa helped me with clarifying my understanding of the relevant exact sequence.
Ohsawa also remarked that the vanishing of $H^{0,1}_c(\Omega,E)$
(see Section \ref{subsec:reduction-to-L2-cohomology}) immediately follows from the Oka--Cartan theorem.
Shin-ichi Matsumura pointed out a redundant argument that had existed in an earlier version of the manuscript.

\section{Einstein deformation theory of asymptotically complex hyperbolic metrics}
\label{sec:ACH-metrics}

\subsection{The Cheng--Yau metric}

Cheng and Yau considered the Calabi problem on noncompact complex manifolds in \cite{Cheng-Yau-80}.
In particular, they established the following theorem.

\begin{thm}[Cheng--Yau~\cite{Cheng-Yau-80}]
	Let $\Omega$ be a smoothly bounded strictly pseudoconvex domain in a Stein manifold of dimension $n\ge 2$.
	Then, for each fixed $\lambda<0$, there exists a unique complete K\"ahler metric $g$ on $\Omega$
	satisfying $\Ric(g)=\lambda g$.
\end{thm}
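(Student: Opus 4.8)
The plan is to reduce the existence statement to a Dirichlet problem for a degenerate complex Monge--Amp\`ere equation on the bordered domain $\overline{\Omega}$, in the spirit of Fefferman's reformulation, and to solve that problem by the continuity method together with a priori estimates of Yau type adapted to the complete, noncompact setting.

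After normalizing $\lambda=-(n+1)$, I would seek the metric in the form $g_{i\bar\jmath}=-\bdry_i\bdry_{\bar\jmath}\log(-u)$, where $u\in C^\infty(\Omega)\cap C^{0}(\overline\Omega)$ satisfies $u<0$ in $\Omega$, $u=0$ on $\bdry\Omega$, and the Monge--Amp\`ere equation
\[
	J[u]:=(-1)^n\det\begin{pmatrix} u & u_{\bar k}\\ u_j & u_{j\bar k}\end{pmatrix}=1
	\qquad\text{in }\Omega.
\]
A direct computation shows that such a $u$ yields a K\"ahler metric (since $-\log(-u)$ is a global potential) with $\Ric(g)=-(n+1)g$. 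The degeneracy lies at the boundary: although the right-hand side is a positive constant, the Hessian of $-\log(-u)$ blows up as $u\to 0$, so $g$ is complete while $u$ is only H\"older-regular up to $\bdry\Omega$.

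The analytic core is then to solve this equation. Fix a defining function $\rho$ and let $\underline u$ be Fefferman's approximate solution, smooth on $\overline\Omega$ with $J[\underline u]=1+O\bigl((-\rho)^{n+1}\bigr)$. I would run the continuity method along $J[u_t]=(1-t)J[\underline u]+t$, with $u_t=\underline u$ outside a collar and $u_t-\underline u$ in a suitable weighted H\"older space. Openness follows from the implicit function theorem, the linearized operator being (a constant multiple of) the Laplacian of the complete metric $g_t$, invertible on such spaces because $g_t$ has controlled geometry near infinity. Closedness rests on the a priori estimates: a $C^0$ bound from barriers $c_1\rho\le u_t\le c_2\rho$; interior $C^1$ and $C^2$ bounds in Yau's style, but executed on the complete manifold $(\Omega,g_t)$ via the Omori--Yau generalized maximum principle; uniform ellipticity (positivity of the complex Hessian) kept along the path; and finally $C^{2,\alpha}$ and higher interior regularity from Evans--Krylov and Schauder theory. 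Matching the resulting solution with $\underline u$ near $\bdry\Omega$ yields the asymptotic expansion of $u$, hence the fact that $g$ is asymptotic to a complex-hyperbolic--type metric near the boundary and is therefore complete.

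For uniqueness up to homothety, given two complete K\"ahler--Einstein metrics with the same $\lambda$, I would first establish, again by a barrier-and-maximum-principle argument exploiting completeness, that each has the expected boundary asymptotics and hence is of the form $-i\bdry\partialbar\log(-u)$ for a solution $u$ of the above equation; subtracting the two equations, the difference of the potentials obeys a linear elliptic inequality and is bounded, so the Omori--Yau maximum principle on the complete manifold forces it to be constant. The main obstacle is the interior $C^2$ estimate together with uniform ellipticity on a complete noncompact manifold whose very geometry depends on the unknown solution; this is precisely where Cheng and Yau's refined use of the generalized maximum principle, combined with the choice of Fefferman's approximate solution as a global barrier, is indispensable.
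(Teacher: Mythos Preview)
Your outline is essentially the Cheng--Yau program for domains in $\mathbb{C}^n$, and as such the broad strokes are correct. But two points deserve comment.

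First, the formulation via Fefferman's functional $J[u]=(-1)^n\det\begin{pmatrix}u&u_{\bar k}\\u_j&u_{j\bar k}\end{pmatrix}$ presupposes a global holomorphic coordinate system, or at least a trivialization of the canonical bundle; on a general Stein manifold this determinant is not well-defined as a scalar. The paper handles precisely this point: it does not work with Fefferman's equation directly, but instead fixes an ambient K\"ahler metric $g'$ on the Stein manifold $Y$, sets up a preliminary complete metric $g_{i\bar\jmath}=\partial_i\partial_{\bar\jmath}(-\log\psi)-r_{i\bar\jmath}$ (where $(n+1)r_{i\bar\jmath}$ is the Ricci tensor of $g'$), and then solves the \emph{relative} Monge--Amp\`ere equation $\det(g_{i\bar\jmath}+u_{i\bar\jmath})/\det(g_{i\bar\jmath})=e^{(n+1)u+F}$ by invoking Cheng--Yau's Theorem~4.4. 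This sidesteps the coordinate dependence and is the specific content added beyond the $\mathbb{C}^n$ case. Your proposal would need a similar modification---replacing the right-hand side $1$ by a ratio involving a reference volume form---before it makes sense on $Y$.

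Second, your uniqueness argument has a circularity: you propose to show that \emph{any} complete K\"ahler--Einstein metric on $\Omega$ with $\Ric=\lambda g$ is of the form $-i\partial\bar\partial\log(-u)$ with $J[u]=1$, and then compare potentials. But establishing that a given such metric admits a global potential with the correct boundary behavior is already most of the work. The paper instead appeals to Yau's Schwarz Lemma for volume forms (as in Mok--Yau), which compares the two metrics directly without passing through potentials and gives uniqueness in one stroke.
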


Such a metric is constructed by solving the complex Monge--Amp\`ere equation.
More specifically, normalizing the metric by setting $\lambda=-(n+1)$,
we can obtain such an Einstein metric in the form $g=\tensor{g}{_i_{\conj{j}}}dz^id\conj{z}^j$ with
\begin{equation}
	\label{eq:Cheng-Yau-metric}
	\tensor{g}{_i_{\conj{j}}}=\frac{\partial^2(-\log\varphi)}{\partial z^i\partial\conj{z}^j}
	+\tensor{h}{_i_{\conj{j}}},
\end{equation}
where $\varphi\in C^\infty(\Omega)\cap C^{n+1,\alpha}(\overline{\Omega})$ is a positive defining function
of $\Omega$ and $\tensor{h}{_i_{\conj{j}}}$ is a Hermitian symmetric form on the ambient Stein manifold $Y$;
when $Y=\mathbb{C}^n$ one can always take $\tensor{h}{_i_{\conj{j}}}=0$.
In what follows, by the Cheng--Yau metric, this one is always referred to.
The uniqueness of the metric~\cite{Cheng-Yau-80}*{Theorem 8.3} follows from
Yau's Schwarz Lemma for volume forms~\cite{Mok-Yau-83}*{Section 1}.

Precisely speaking, in~\cite{Cheng-Yau-80} the existence of $\varphi$ is
stated explicitly only for domains in $\mathbb{C}^n$ and in K\"ahler manifolds admitting metrics of
negative Ricci curvature.
We argue as follows to see that it extends to the case of domains in Stein manifolds
(see also van Coevering~\cite{vanCoevering-12}*{Section 3.2} discussing in a more general setting).
Let $\tensor*{r}{^0_i_{\overline{j}}}$ be the Ricci tensor of some fixed K\"ahler metric $g^0$ on $Y$.
If $\psi\in C^\infty(\overline{\Omega})$ is a positive defining function
such that $-\psi$ is strictly plurisubharmonic on $\overline{\Omega}$, then on $\Omega$,
$\tensor{g}{_i_{\conj{j}}}=\partial_i\partial_{\conj{j}}(-\log\psi)-(n+1)^{-1}\tensor*{r}{^0_i_{\overline{j}}}$
defines a K\"ahler metric at least near the boundary.
The Steinness of $Y$ implies that
one can modify $\psi$ so that $\tensor{g}{_i_{\conj{j}}}$ becomes positive definite on the whole $\Omega$.
By~\cite{Cheng-Yau-80}*{Theorem 4.4}, there exists a function $u\in C^\infty(\Omega)$ for which
$\tensor{g}{_i_{\conj{j}}}+\tensor{u}{_i_{\conj{j}}}$ is a metric that is quasi-equivalent to
$\tensor{g}{_i_{\conj{j}}}$ and satisfies
\begin{equation*}
	\frac{\det(\tensor{g}{_i_{\conj{j}}}+\tensor{u}{_i_{\conj{j}}})}{\det(\tensor{g}{_i_{\conj{j}}})}
	=e^{(n+1)u+F},\qquad
	F=\log\left(\frac{\det(g^0_{i{\overline{j}}})}{\psi^{n+1}\det(\tensor{g}{_i_{\conj{j}}})}\right).
\end{equation*}
Then it can be seen that $\tensor{g}{_i_{\conj{j}}}+\tensor{u}{_i_{\conj{j}}}$ is an
Einstein K\"ahler metric.

A direct computation shows that the holomorphic sectional curvature of the
metric~\eqref{eq:Cheng-Yau-metric} uniformly tends to $-2$ at infinity
(see~\cite{Cheng-Yau-80}*{Equation (1.22)} for the case $Y=\mathbb{C}^n$), i.e.,
\begin{equation}
	\label{eq:asymptotic-curvature-behavior}
	\tensor{R}{_i_{\conj{j}}_k_{\conj{l}}}=
	-(\tensor{g}{_i_{\conj{j}}}\tensor{g}{_k_{\conj{l}}}+\tensor{g}{_i_{\conj{l}}}\tensor{g}{_k_{\conj{j}}})
	+o(1)\qquad \text{at $\bdry\Omega$},
\end{equation}
where the notation $o(1)$ means that this term has pointwise norm (with respect to $g$) that
tends to zero uniformly at $\bdry\Omega$.

The Cheng--Yau metric $g$ has bounded geometry in the sense that
its injectivity radius $r_\mathrm{inj}$ is positive
and the curvature tensor, as well as its covariant derivative of arbitrary order, is bounded.
The positivity of $r_\mathrm{inj}$ can be verified by using, for example,
a criterion due to Ammann--Lauter--Nistor~\cite{Ammann-Lauter-Nistor-04}*{Proposition 4.19}.
The boundedness of $\nabla^mR$, $m\ge 0$, follows from the boundedness of
$\nabla^mu$, shown in the proof of~\cite{Cheng-Yau-80}*{Theorem 4.4};
alternatively, one can use the existence of an asymptotic expansion of $\varphi$ involving
logarithmic terms established by Lee--Melrose~\cite{Lee-Melrose-82}.

\subsection{ACH metrics}
\label{subsec:ACH-metrics}

The model of asymptotically complex hyperbolic metrics is given by the following metric
on $M\times(0,1)$, where $x$ is the coordinate of the second factor:
\begin{equation}
	\label{eq:ACH-model}
	g_0=\frac{1}{2}\left(4\frac{dx^2}{x^2}+\frac{\theta^2}{x^4}+\frac{L_\theta}{x^2}\right).
\end{equation}
Here $M$ is a closed strictly pseudoconvex partially integrable CR manifold of dimension $2n-1$.
Recall that an almost CR manifold $(M,H,J)$ is said to be \emph{partially integrable} if
\begin{equation*}
	[\Gamma(T^{1,0}M),\Gamma(T^{1,0}M)]\subset\Gamma(T^{1,0}M\oplus\conj{T^{1,0}M}),
\end{equation*}
where $T^{1,0}M\subset\mathbb{C}H$ is the $i$-eigenbundle of $J\in\Gamma(\End(H))$.
An equivalent condition is that
\begin{equation*}
	L_\theta(\xi,\eta):=d\theta(\xi,J\eta),\qquad\xi,\,\eta\in H
\end{equation*}
is a symmetric form on $H$ for some (hence for any) 1-form $\theta$ for which $\ker\theta=H$.
A partially integrable almost CR manifold, or hereafter a \emph{partially integrable CR manifold},
is called \emph{strictly pseudoconvex} if $L_\theta$ has definite signature.
In this case, we always take $\theta$ such that the Levi form $L_\theta$ is positive definite,
which we call a \emph{pseudohermitian structure}.
(Note that the partial integrability is a pointwise condition, whereas the integrability is a condition
given by a differential equation.
For a fixed $\theta$, giving a strictly pseudoconvex partially integrable almost CR structure
for which $\theta$ is a pseudohermitian structure
amounts to giving a reduction of the structure group from $\Sp(2n-2,\mathbb{R})$ to $U(n-1)$,
and hence, such almost CR structures are exactly the elements of
the set of smooth sections of a certain fiber bundle with fibers diffeomorphic to $\Sp(2n-2,\mathbb{R})/U(n-1)$.)

Using the model metric \eqref{eq:ACH-model},
we define as follows (see also Biquard~\cite{Biquard-00},
Biquard--Mazzeo~\cites{Biquard-Mazzeo-06,Biquard-Mazzeo-11}).

\begin{dfn}
	\label{dfn:ACH-metrics}
	Let $X$ be a noncompact smooth manifold of real dimension $2n$, where $n\ge 2$,
	which compactifies into a smooth manifold-with-boundary $\overline{X}$.
	Then a Riemannian metric $g$ on $X$ is called \emph{asymptotically complex hyperbolic} (or ACH for short)
	if there exists a diffeomorphism between a neighborhood of $M:=\bdry\overline{X}$ in $\overline{X}$ and
	$M\times[0,\varepsilon)$ under which
	there is some strictly pseudoconvex partially integrable CR structure $(H,J)$ and a pseudohermitian
	structure $\theta$ on $M$ such that
	\begin{equation}
		\label{eq:ACH-definition}
		g=g_0+k,\qquad k\in C^{2,\alpha}_\delta(S^2T^*X)
	\end{equation}
	for some $\delta>0$, where $g_0$ is the model metric \eqref{eq:ACH-model} for $(M,H,J,\theta)$.
	The partially integrable CR structure $(H,J)$, or $J$, is called the \emph{conformal infinity} of $g$.
\end{dfn}

Here the space
\begin{equation*}
	C^{2,\alpha}_\delta(S^2T^*X):=x^\delta C^{2,\alpha}(S^2T^*X)
\end{equation*}
is the weighted H\"older space of symmetric 2-tensors on $X$ with respect to $g_0$.
For $0<\alpha<1$, the $C^{\alpha}$ H\"older norm of a section $s$ of a tensor bundle $E$ is defined by
\begin{equation}
	\label{eq:Holder-norm}
	\norm{s}_{C^{\alpha}}
	:=\sup\abs{s}
	+\sup_{\dist(p,q)<r_\mathrm{inj}}\frac{\abs{\Pi_{p\to q}(s(p))-s(q)}}{\dist(p,q)^\alpha},
\end{equation}
where $\Pi_{p\to q}\colon E_p\to E_q$ is the parallel transport along the minimizing geodesic from $p$ to $q$,
and we set
\begin{equation*}
	\norm{s}_{C^{k,\alpha}}
	:=\sum_{m=0}^{k-1}\sup\abs{\nabla^ms}+\norm{\nabla^ks}_{C^{\alpha}}.
\end{equation*}
For $s\in C^{k,\alpha}_\delta(E)$, we define
\begin{equation*}
	\norm{s}_{C^{k,\alpha}_\delta}:=\norm{x^{-\delta}s}_{C^{k,\alpha}}.
\end{equation*}
Note that the norm \eqref{eq:Holder-norm} remains equivalent when
the supremum over $\dist(p,q)<r_\mathrm{inj}$ is replaced with
the supremum over $\dist(p,q)<r$ for any fixed parameter $r\in(0,r_\mathrm{inj})$.
An alternative way to define the H\"older spaces above is to use
a system of uniformly locally finite coordinate charts $\set{(U_\lambda,\Phi_\lambda)}$,
where $\Phi_\lambda$ maps $U_\lambda\subset X$ onto the open ball $B_r\subset\mathbb{R}^{2n}$
of radius $r>0$ (independent of $\lambda$), such that
\begin{equation}
	\label{eq:bounded-geometry-coordinates}
	c^{-1}\tensor{\delta}{_i_j}<\tensor{(g_0)}{_i_j}<c\tensor{\delta}{_i_j},\qquad
	\abs{\partial_{k_1}\dots\partial_{k_m}\tensor{(g_0)}{_i_j}}<c_m
\end{equation}
holds in each chart and $X$ is covered by $\set{\Phi_\lambda^{-1}(B_{r/2}(0))}$.
In particular, one can use a version of Lee's ``M\"obius coordinates'' \cite{Lee-06} adapted to the ACH case
to describe the H\"older spaces (this approach is taken in \cite{Roth-99-Thesis}).
A consequence of this is that the H\"older spaces for $g_0$ are irrelevant to the choice of $(J,\theta)$.

The Cheng--Yau metric $g$ on a smoothly bounded strictly pseudoconvex domain $\Omega$
can be regarded as an ACH metric in the sense of Definition \ref{dfn:ACH-metrics}
via the ``square root construction'' of Epstein--Melrose--Mendoza~\cite{Epstein-Melrose-Mendoza-91}
(see also~\cite{Matsumoto-16}*{Example 2.4}).
In this case, the compactification $\overline{X}$ is topologically $\overline{\Omega}$,
but the $C^\infty$-structure is replaced so that the square roots of boundary defining functions are
regarded as smooth. Therefore, in this case,
\begin{equation*}
	C^{k,\alpha}_\delta(S^2T^*X)=\varphi^{\delta/2}C^{k,\alpha}(S^2T^*X),
\end{equation*}
where $\varphi$ is any smooth positive defining function of $\Omega$.
Although the identity map $\iota\colon\overline{X}\to\overline{\Omega}$ does not have smooth inverse,
its restriction to the interior $X$ and to the boundary $\bdry\overline{X}$ are diffeomorphisms onto $\Omega$ and
$\bdry\Omega$, respectively. Thus $\bdry\overline{X}$ is identified with $\bdry\Omega$.
The conformal infinity of $g$ is actually the natural CR structure on $\bdry\Omega$, i.e.,
the one induced from the complex structure of the ambient Stein manifold.

\subsection{Einstein deformations}
\label{subsec:Einstein-deformations}

Let $g$ be an arbitrary ACH metric on $X$ satisfying the Einstein equation, which is in fact forced to be
\begin{equation*}
	\Ric(g)=-(n+1)g.
\end{equation*}
The conformal infinity of $g$, which is a partially integrable CR structure on $M=\bdry\overline{X}$,
is denoted by $(H,J)$.

Take a neighborhood of $J$ in the set of all $C^{2,\alpha}$ partially integrable CR structures
admitted by the contact distribution $H$.
This is identified with a neighborhood of the origin in the H\"older space $C^{2,\alpha}(S^2(\wedge^{1,0}M))$
of anti-hermitian symmetric 2-tensors over $T^{1,0}M$.
The identification is given as follows:
For $\psi\in C^{2,\alpha}(S^2(\wedge^{1,0}M))$ with sufficiently small norm
(with respect to the Levi form $L_\theta$ for $J$ and some fixed pseudohermitian structure $\theta$),
expressed as $\tensor{\psi}{_\alpha_\beta}$ with respect to a local frame $\set{Z_\alpha}$ of $T^{1,0}M$,
the vector bundle
\begin{equation*}
	\fourIdx{\psi}{}{}{}{\smash{T^{1,0}}}M
	=\Span\set{Z_\alpha+\tensor{\psi}{_\alpha^{\conj{\beta}}}Z_{\conj{\beta}}}
\end{equation*}
defines the corresponding $C^{2,\alpha}$ partially integrable CR structure $J_\psi$,
where the second index of $\psi$ is raised by the Levi form and Einstein's summation convention is observed.
We write $J_\psi=J+\psi$.

We first consider a preliminary extension of $J_\psi$ to an ACH metric $g_\psi$.
Identify a neighborhood of $\bdry\overline{X}$ with $M\times[0,\varepsilon)$ so that $g$ is as in
\eqref{eq:ACH-definition}.
Fix a cutoff function $\chi\in C^\infty(\overline{X})$ that equals $1$ near $\bdry\overline{X}$
and is supported in $M\times[0,\varepsilon)$.
Let $\tilde{\psi}$ be the difference of the Levi forms of $J$ and $J_\psi$ with respect to $\theta$, and set
\begin{equation*}
	g_\psi=g+\frac{\chi\tilde{\psi}}{x^2}.
\end{equation*}
If $\norm{\psi}_{C^{2,\alpha}}$ is sufficiently small, then $g_\psi$ becomes an (in general non-smooth) ACH metric
with conformal infinity $J_\psi$. Note that $g_0=g$.
One can verify that the metric $g_\psi$ is approximately Einstein in the sense that
\begin{equation*}
	\Ric(g_\psi)+(n+1)g_\psi\in C_1^{0,\alpha}(S^2T^*X);
\end{equation*}
hence, if $\mathcal{E}$ is defined by \eqref{eq:gauged-Einstein-equation} with $\lambda=-(n+1)$,
then it is trivial that $\mathcal{E}_{g_\psi}(0)\in C_1^{0,\alpha}(S^2T^*X)$.

We define a map
\begin{equation}
	Q\colon\mathcal{B}\to C^{2,\alpha}(S^2(\wedge^{1,0}M))\oplus C_\delta^{0,\alpha}(S^2T^*X),
\end{equation}
where $0<\delta\le 1$ and $\mathcal{B}$ is a small neighborhood of
$0\in C^{2,\alpha}(S^2(\wedge^{1,0}M))\oplus C_\delta^{2,\alpha}(S^2T^*X)$, by
\begin{equation}
	Q(\psi,h)=(\psi,\mathcal{E}_{g_\psi}(h)).
\end{equation}
We shall prove that this is bijective near the origin; then the inverse image of $(\psi,0)$ contains
only one element $(\psi,h_\psi)$ close to the origin,
for which the metric $g_\psi+h_\psi$ is an Einstein ACH metric with conformal infinity $J_\psi$
by~\cite{Biquard-00}*{Lemme I.1.4}.
By the inverse function theorem,
it suffices to show that the linearization of $Q$ is an isomorphism between the Banach spaces.
Since the first component of $Q$ is just the identity map, what we have to verify is that
\begin{equation}
	\label{eq:linearized-Einstein-on-decay-1}
	\mathcal{E}_g'\colon C_\delta^{2,\alpha}(S^2T^*X)\to C_\delta^{0,\alpha}(S^2T^*X)
\end{equation}
is isomorphic.

The Fredholm theory of ACH metrics
shows that the operator \eqref{eq:linearized-Einstein-on-decay-1} is an isomorphism if and only if
the $L^2$ kernel vanishes.
Here the $L^2$ kernel, denoted by $\ker_{(2)}\mathcal{E}_g'$,
is the kernel of $\mathcal{E}_g'$ understood as an unbounded operator with domain
\begin{equation*}
	\dom\mathcal{E}_g':=\set{\alpha\in L^2(S^2T^*X)|\mathcal{E}_g'\alpha\in L^2(S^2T^*X)},
\end{equation*}
which is called the maximal closed extension.
Thus the space $\ker_{(2)}\mathcal{E}_g'$ is called the \emph{obstruction space} of Einstein deformations,
and when it vanishes $g$ is called \emph{nondegenerate}.
We can summarize the discussion so far as in the next proposition
(the uniqueness statement follows from~\cite{Biquard-00}*{Proposition I.4.6}).

\begin{prop}[Roth \cite{Roth-99-Thesis}*{Theorem 1.1}, Biquard \cite{Biquard-00}*{Th\'eor\`eme I.4.8}]
	\label{prop:possibility-of-deformation}
	Let $g$ be a nondegenerate Einstein ACH metric with conformal infinity $J$.
	Then, for any $\psi\in C^{2,\alpha}(S^2(\wedge^{1,0}M))$ close enough to zero,
	there exists an Einstein ACH metric $g'$ whose conformal infinity is $J_\psi=J+\psi$.
	The metric $g'$ is locally unique in the sense that
	any Einstein ACH metric lying in a sufficiently small $C^{2,\alpha}_\delta$-neighborhood
	of $g'$ pulls back to $g'$ by a diffeomorphism on $X$ inducing the identity on $\bdry\overline{X}$.
\end{prop}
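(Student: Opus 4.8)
The plan is to deduce the proposition by applying the inverse function theorem to the map $Q$ constructed above, using the Fredholm theory of ACH metrics from \cite{Biquard-00} to control its linearization, and then invoking the gauge lemmas of \cite{Biquard-00} to pass between the Bianchi-gauged equation $\mathcal{E}_{g_\psi}(h)=0$ and the Einstein equation proper.

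First I would check that $Q$ is a map of class $C^1$ (in fact smooth) between the stated Banach spaces on a sufficiently small neighborhood $\mathcal{B}$ of the origin: the metric $g_\psi$ depends smoothly on $\psi$, the seed term $\mathcal{E}_{g_\psi}(0)$ lies in $C^{0,\alpha}_1(S^2T^*X)\subset C^{0,\alpha}_\delta(S^2T^*X)$ for $0<\delta\le 1$, and the quasilinear remainder of $\mathcal{E}_{g_\psi}$ is a smooth function of $h$ vanishing to second order at $h=0$ and carrying $C^{2,\alpha}_\delta$ into $C^{0,\alpha}_\delta$. Since the first component of $Q$ is the identity, the differential $dQ_{(0,0)}$ is block-triangular with diagonal blocks the identity on $C^{2,\alpha}(S^2(\wedge^{1,0}M))$ and the linearized gauged Einstein operator $\mathcal{E}_g'=\frac{1}{2}(\nabla^*\nabla-2\mathring{R})$; hence $dQ_{(0,0)}$ is an isomorphism if and only if the operator \eqref{eq:linearized-Einstein-on-decay-1} is.

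The heart of the matter---and the step I expect to be the main obstacle---is thus the isomorphism property of \eqref{eq:linearized-Einstein-on-decay-1}. Because $g$ has bounded geometry and is modelled at infinity on $g_0$, the operator $\mathcal{E}_g'$ fits into the degenerate-elliptic calculus attached to the ACH structure; its normal operator along $M=\bdry\overline{X}$ is explicitly computable, so its indicial roots form a discrete set, symmetric about the $L^2$-critical weight. For $\delta$ in the admissible range $0<\delta\le 1$, which avoids the nonzero indicial roots, the operator \eqref{eq:linearized-Einstein-on-decay-1} is Fredholm, and, $\mathcal{E}_g'$ being formally self-adjoint, of index zero, so it is an isomorphism precisely when it is injective. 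The absence of indicial roots in the relevant interval forces any $\alpha\in C^{2,\alpha}_\delta$ with $\mathcal{E}_g'\alpha=0$ to decay fast enough to lie in the maximal $L^2$ domain of $\mathcal{E}_g'$, while conversely the a priori decay of $L^2$-domain solutions recalled in Subsection~\ref{subsec:Einstein-deformations} returns such solutions to $C^{2,\alpha}_\delta$; the kernel of \eqref{eq:linearized-Einstein-on-decay-1} therefore coincides with $\ker_{L^2}\mathcal{E}_g'$, and \eqref{eq:linearized-Einstein-on-decay-1} is an isomorphism exactly when $g$ is nondegenerate. Carrying out the indicial-root computation for this Lichnerowicz-type operator on symmetric $2$-tensors and establishing the a priori decay are the technically demanding points; in our setting they are supplied by \cite{Biquard-00}.

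Granting that $g$ is nondegenerate, \eqref{eq:linearized-Einstein-on-decay-1} is an isomorphism, so $Q$ is a bijection between a neighborhood of the origin and a neighborhood of the origin. For $\psi$ close to zero the fiber $Q^{-1}(\psi,0)$ then contains a unique small element $(\psi,h_\psi)$; by \cite{Biquard-00}*{Lemme I.1.4} the decay of $h_\psi$ forces the Bianchi term $\delta_{g_\psi+h_\psi}^*\mathcal{B}_{g_\psi}(h_\psi)$ to vanish, so $g':=g_\psi+h_\psi$ is a genuine Einstein ACH metric, and its conformal infinity is $J_\psi=J+\psi$ because $h_\psi\in C^{2,\alpha}_\delta$ with $\delta>0$ does not alter the leading term of $g_\psi$. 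For the local uniqueness, given another Einstein ACH metric $\Hat{g}$ with $\Hat{g}-g'\in C^{2,\alpha}_\delta$, I would produce a diffeomorphism $\Phi$ of $X$ restricting to the identity on $\bdry\overline{X}$ that places $\Phi^*\Hat{g}$ in Bianchi gauge relative to $g'$---solving a gauge-fixing elliptic equation whose linearization is again governed by the same Fredholm theory---so that $h:=\Phi^*\Hat{g}-g'$ satisfies $\mathcal{E}_{g'}(h)=0$ with $h$ small in $C^{2,\alpha}_\delta$, and then conclude $h=0$ from the injectivity of $Q$ near the origin, i.e., $\Phi^*\Hat{g}=g'$; this is precisely \cite{Biquard-00}*{Proposition I.4.6}.
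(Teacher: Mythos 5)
Your proposal is correct and follows essentially the same route as the paper, which itself recalls Biquard's argument: the inverse function theorem applied to $Q$, reduction to the isomorphism of \eqref{eq:linearized-Einstein-on-decay-1} via the Fredholm theory with indicial radius $R_{\mathcal{E}_g'}=n$ and the identification of the kernel with $\ker_{L^2}\mathcal{E}_g'$, the gauge-breaking Lemme I.1.4, and Proposition I.4.6 for uniqueness. Nothing essential is missing.
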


In the rest of this subsection, we describe the reason why $\ker_{(2)}\mathcal{E}_g'=0$ implies the
isomorphicity of \eqref{eq:linearized-Einstein-on-decay-1}
by putting it in the context of
general theory of geometrically defined elliptic linear differential operators for ACH metrics.

Let $P\colon\Gamma(E)\to\Gamma(F)$ be an elliptic differential operator of order $m$ on a manifold equipped
with an ACH metric,
where $E$ and $F$ are subbundles of $(TX)^{\otimes s}\otimes(T^*X)^{\otimes t}$
invariant under the action of $O(2n)$
(the group $O(2n)$ can be replaced by $U(n)$ for K\"ahler ACH metrics),
with a universal expression in terms of the Levi-Civita covariant
differentiation and the actions of the curvature tensor of $g$.
Such an operator is called \emph{geometric}.
A consequence of this assumption is that $P$ determines a well-defined mapping
\begin{equation*}
	C_\delta^{m,\alpha}(E)\to C_\delta^{0,\alpha}(E),
\end{equation*}
and also
\begin{equation*}
	H^m_\delta(E)\to L^2_\delta(E),
\end{equation*}
for an arbitrary $\delta\in\mathbb{R}$. Here $L^2_\delta(E)$ and $H^m_\delta(E)$ are weighted $L^2$ and
$L^2$-Sobolev spaces,
which are defined by $L^2_\delta(E):=x^\delta L^2(E)$ and $H^k_\delta(E):=x^\delta H^k(E)$.

Another virtue of the geometricity is that it allows us to consider the operator $P$
on any ACH manifold simultaneously.
In particular, $P$ makes sense on $\mathbb{C}H^n$.
Then we can formulate the following coerciveness assumption, which is crucial in the next lemma:
\begin{equation}
	\label{eq:coercivity}
	\norm{\alpha}^2\le C\norm{P\alpha}^2,\qquad\alpha\in\dom P\subset L^2(E),\qquad\text{on $\mathbb{C}H^n$}.
\end{equation}
Its validity for $P=\mathcal{E}_g'$ is obvious from \eqref{eq:linearized-Einstein-formula}.

We need in addition to introduce a nonnegative real number $R_P$ called the \emph{indicial radius} of $P$.
For this, we again consider the operator $P$ on $\mathbb{C}H^n$.
Fixing an origin $o\in\mathbb{C}H^n$ identifies a subgroup $G=U(n)$ of the group of isomorphisms $G_0=\PSU(n,1)$,
which gives the expression
\begin{equation*}
	\mathbb{C}H^n\cong G_0/G.
\end{equation*}
Then $E$ and $F$ are expressed as the associated bundles $G_0\times_GV$ and $G_0\times_GW$, respectively,
where $V$ and $W$ are representations of $G$.
Furthermore, we fix a unit tangent vector $v_0\in T_o\mathbb{C}H^n$ at the origin $o$,
and let $\gamma(r)=\exp(rv_0)$ be the geodesic that it determines.
Since $G$ acts on $T_o\mathbb{C}H^n$, $v_0$ specifies the isotropy subgroup $H=U(n-1)\subset G$.
To state it differently, $H$ is the isotropy subgroup of the $G$-action on the sphere at infinity $S^{2n-1}$
about the limit point of $\gamma$. Thus
\begin{equation*}
	S^{2n-1}\cong G/H.
\end{equation*}
We introduce an identification
\begin{equation*}
	\mathbb{C}H^n\setminus\set{o}\cong S^{2n-1}\times (0,\infty)
\end{equation*}
that maps $g\cdot\gamma(r)$ to the pair $(gH,r)$, where $g\in G$.
Then, restricted on each concentric sphere $S_r:=S^{2n-1}\times\set{r}$,
the vector bundle $E$ can be seen as a homogeneous bundle over $G/H$.
If we identify the fiber $E_{\gamma(r)}$ with $V=E_o$ by the parallel transport along $\gamma$,
then
\begin{equation*}
	E|_{S_r}\cong G\times_HV.
\end{equation*}
Therefore, a section of $E$ (over $\mathbb{C}H^n\setminus\set{o}$) can be identified with
a function on $G\times(0,\infty)$ with values in $V$ that is $H$-equivariant.
A similar identification can be introduced for sections of $F$.

We can write $P$ down in terms of this expression of sections of $E$ and $F$
as vector-valued functions on $G\times (0,\infty)$.
Then it follows from \cite{Biquard-00}*{Equation (I.1.2)}
that the derivatives tangent to $G$ vanish at the limit $r\to\infty$.
In fact, $P$ has the following form, where each $a_i(r)$ is a function with values in the space $\Hom_H(V,W)$
of $H$-equivariant linear mappings $V\to W$:
\begin{equation*}
	P=a_0(r)\partial_r^m+a_1(r)\partial_r^{m-1}+\dots+a_m(r)+O(e^{-r}).
\end{equation*}
As $r\to\infty$, the coefficients $a_i(r)$ have well-defined limits $a_i\in\Hom_H(V,W)$.
A number $s\in\mathbb{C}$ is called an \emph{indicial root} of $P$ when
\begin{equation*}
	I_P(s):=(-1)^ma_0s^m+(-1)^{m-1}a_1s^{m-1}+\dots+a_m\in\Hom_H(V,W)
\end{equation*}
fails to be injective.
Intuitively, the set $\Sigma_P$ of indicial roots is such that any solution of $Pu=0$ is expected to behave
asymptotically like $u\sim u_0e^{-sr}$ for some $s\in\Sigma_P$ and a section $u_0$ over $S^{2n-1}$.

For the formal adjoint operator $P^*\colon\Gamma(F)\to\Gamma(E)$, we can show that
\begin{equation*}
	I_{P^*}(s)=I_P(2n-\conj{s})^*.
\end{equation*}
The number $2n$ here can be understood as the twice the borderline weight of being in the $L^2$ space.
(See also Lee \cite{Lee-06}*{Proposition 4.4} for the AH case,
in which the ``weight of the tensor bundle'' appears in the formula because of a slight difference
in the definition of indicial roots.)
In particular, when $P$ is formally self-adjoint, then the indicial roots appear symmetrically
about the line $\re s=n$.
In this case,
\begin{equation*}
	R_P:=\min_{s\in\Sigma_P}\abs{\re s-n}
\end{equation*}
is called the \emph{indicial radius} of $P$.
Now we can formulate the following proposition.

\begin{prop}[Biquard~\cite{Biquard-00}*{Proposition I.3.5}]
	\label{prop:apriori}
	Assume that the operator $P$ acting on sections of $E$ is formally self-adjoint and
	satisfies the coerciveness estimate \eqref{eq:coercivity} on $\mathbb{C}H^n$.
	Then, for $\abs{\delta}<R_P$, the operator $P$ seen as mappings
	\begin{equation}
		\label{eq:Holder-decay}
		C_{n+\delta}^{m,\alpha}(E)\to C_{n+\delta}^{0,\alpha}(E)
	\end{equation}
	and
	\begin{equation}
		\label{eq:Sobolev-decay}
		H^m_\delta(E)\to L^2_\delta(E)
	\end{equation}
	are Fredholm with index zero, and the kernel of each of the mappings above equals $\ker_{(2)}P$.
\end{prop}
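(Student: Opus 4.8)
The plan is to follow the standard route for geometric elliptic operators on asymptotically (complex) hyperbolic manifolds: reduce the Fredholm question on $X$ to the invertibility of a model operator at infinity, and then settle the model on $\mathbb{C}H^n$ by separating the radial variable. \emph{Reduction to the model at infinity.} Because $P$ is geometric and $g$ is ACH, near $M$ the operator has the displayed form $P=a_0(r)\partial_r^m+\dots+a_m(r)+O(e^{-r})$, and its coefficients converge (with all errors controlled by the decay $k\in C^{2,\alpha}_\delta$ in \eqref{eq:ACH-definition}) to those of the corresponding operator obtained by considering $P$ on $\mathbb{C}H^n$, since the curvature of $g$ tends to the complex hyperbolic value at infinity. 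I would then build a parametrix for $P$ by patching: on a compact core, an interior elliptic parametrix whose remainder is smoothing, hence compact on the weighted spaces; near $M$, the inverse of the model operator transplanted from $\mathbb{C}H^n$, whose remainder has small operator norm provided the weight avoids the exceptional values determined by $\Sigma_P$, i.e.\ provided $\abs{\delta}<R_P$. (This is the $\Theta$-pseudodifferential calculus of Epstein--Melrose--Mendoza; one may also do the patching by hand, as bounded geometry and the uniform charts \eqref{eq:bounded-geometry-coordinates} make all estimates uniform.) The conclusion is that $P$ is Fredholm on $C^{m,\alpha}_{n+\delta}(E)\to C^{0,\alpha}_{n+\delta}(E)$ and on $H^m_\delta(E)\to L^2_\delta(E)$ once $P$ on $\mathbb{C}H^n$ is invertible on the matching weighted space.

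\emph{The model on $\mathbb{C}H^n$.} On the central ($L^2$) line, the coercivity estimate \eqref{eq:coercivity} gives both injectivity of $P$ and closedness of its range, and since $P$ is self-adjoint on its natural domain (Laplace or uniformly elliptic type on a complete manifold of bounded geometry) the range is then all of $L^2(E)$; thus $P$ is invertible on $L^2(\mathbb{C}H^n;E)$. To move off the $L^2$ line I would exploit the $\PSU(n,1)$-homogeneity: after the $H$-equivariant reduction of $E$ over the radial slices $S_r$ and a Mellin transform in $r$, the operator $P$ becomes the indicial family $I_P(s)$ up to an exponentially small remainder in $r$; by the definition $R_P=\min_{s\in\Sigma_P}\abs{\re s-n}$ together with the symmetry of $\Sigma_P$ about $\re s=n$ (which is $I_{P^*}(s)=I_P(2n-\conj{s})^*$ specialized to $P^*=P$), the inverse $I_P(s)^{-1}$ is holomorphic and uniformly bounded on every closed substrip of $\set{\abs{\re s-n}<R_P}$. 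A contour shift then turns the $L^2$-inverse into a two-sided inverse $L^2_\delta\to H^m_\delta$ (and $C^{0,\alpha}_{n+\delta}\to C^{m,\alpha}_{n+\delta}$) for every $\abs{\delta}<R_P$, which finishes the Fredholm assertion on $X$.

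\emph{Index zero and the kernel.} The conjugated family $x^{-\delta}Px^{\delta}$, $\delta\in(-R_P,R_P)$, depends continuously on $\delta$ as operators between the fixed spaces $H^m(E)\to L^2(E)$ (resp.\ the fixed Hölder spaces) and is Fredholm throughout, so its index is constant; at $\delta=0$, formal self-adjointness identifies the cokernel with the kernel via the pairing of $L^2_\delta$ with $L^2_{-\delta}$, forcing the index to be $0$. For the kernels, note $x^{\delta}L^2\subset L^2\subset x^{-\delta}L^2$ for $0\le\delta<R_P$, and that any solution of $P\alpha=0$ in one of these spaces is, by elliptic regularity near $M$ together with the Mellin/indicial analysis, polyhomogeneous with exponents in $\Sigma_P$; since no indicial root has $\abs{\re s-n}<R_P$, the decay rate of a null solution cannot halt anywhere inside that strip, so membership in $x^{-\delta}L^2$ already forces membership in $L^2$, and membership in $L^2$ forces membership in $x^{\delta}L^2$ (one may clinch the last implication by feeding the $L^2$-decay back into near-boundary integration by parts against \eqref{eq:coercivity}). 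Hence every kernel in question equals $\ker_{L^2}P$, which simultaneously proves the kernel statement and makes the index $\dim\ker_{L^2}P-\dim\ker_{L^2}P=0$; the Hölder-space kernel agrees with these by Schauder estimates.

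The step I expect to be the main obstacle is the weight shifting in the model analysis: obtaining the uniform, contour-shiftable bound on $I_P(s)^{-1}$ across the full strip $\abs{\re s-n}<R_P$ while keeping the $r$-dependent remainder in the Mellin picture under control. This is precisely where the hypotheses---the coercivity estimate and the definition of the indicial radius---do their work, and it is the analytic heart of the proof; by comparison, the $\Theta$-calculus patching and the index bookkeeping are routine once bounded geometry is in hand.
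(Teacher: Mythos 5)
The paper gives no proof of this proposition---it is quoted directly from Biquard's Proposition I.3.5---and your sketch reconstructs essentially the argument of that cited source: invertibility of the model operator on $\mathbb{C}H^n$ from the coercivity estimate together with the indicial (Mellin) analysis to shift weights within $\abs{\delta}<R_P$, parametrix patching to transfer Fredholmness to $X$, duality at $\delta=0$ to pin the index at zero, and the absence of indicial roots in the strip to identify all the kernels with $\ker_{L^2}P$. This matches the standard proof at the level of detail the paper itself operates at, so there is nothing to flag.
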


Recall that the estimate~\eqref{eq:coercivity} for the linearized gauged Einstein operator $\mathcal{E}_g'$
follows from \eqref{eq:linearized-Einstein-formula}.
A computation of the indicial roots of $\mathcal{E}'_g$ is given (or at least sketched)
in~\cite{Biquard-00}*{Section I.2.A and Lemme I.4.3}, whose result is as follows.
The $G$-representation associated to $E=S^2T^*X$ is
\begin{equation*}
	V=S_\mathbb{R}^2\mathfrak{m}_0^*,
\end{equation*}
where $\mathfrak{g}_0=\mathfrak{g}\oplus\mathfrak{m}_0$ is the Cartan decomposition of
$\mathfrak{g}_0=\mathfrak{su}(n,1)$ for the symmetric space $\mathbb{C}H^n=G_0/G$.
The space $\mathfrak{m}_0$ is canonically identified with $T_o\mathbb{C}H^n$, and
decomposes as $\mathbb{C}v_0\oplus\mathbb{C}^{n-1}$ by the $H$-action.
Since $a_i$'s are $H$-equivariant, by Schur's Lemma each $H$-irreducible component of
$S_\mathbb{R}^2\mathfrak{m}_0^*$ is mapped by the indicial polynomial $I_{\mathcal{E}'_g}(s)$ into
the sum of isomorphic components.
On $S_\mathbb{C}^2(\mathbb{C}^{n-1})^*\to S_\mathbb{C}^2(\mathbb{C}^{n-1})^*$,
the indicial polynomial becomes $s^2-2ns$ times a nonzero constant, which gives indicial roots $0$ and $2n$.
The fact is that these are the closest roots to the borderline $\re s=n$;
hence we conclude that $R_{\mathcal{E}'_g}=n$.

While the computation in~\cite{Biquard-00} is described in terms of relevant Lie algebras,
a more primitive, though probably less insightful, calculation is given
in~\cite{Matsumoto-16}*{Lemma 5.4} (the operator $\Delta_\mathrm{L}+n+2$ that appears
in~\cite{Matsumoto-16} is
nothing but $\mathcal{E}_g'$ up to a constant factor; note that ACH manifolds in~\cite{Matsumoto-16}
has real dimension $2n+2$).
Let us recall this computation.
Rather than using the unit ball model, here we identify $\mathbb{C}H^n$ with $\mathcal{H}^{2n-1}\times(0,\infty)$,
where $\mathcal{H}^{2n-1}$ is the Heisenberg group, so that the complex hyperbolic metric is given by
formula~\eqref{eq:ACH-model} with the standard pseudohermitian structure $\theta$.
Let $T$ be the Reeb vector field on $\mathcal{H}^{2n-1}$ and $\set{Z_1,\dots,Z_{n-1}}$ a local frame of
the CR holomorphic tangent bundle. If we set
$\bm{Z}_\tau:=\frac{1}{2}x\partial_x+ix^2T$ and $\bm{Z}_\alpha:=xZ_\alpha$, $\alpha=1$, $\dots$, $n-1$,
then $\set{\bm{Z}_\tau,\bm{Z}_1,\dots,\bm{Z}_{n-1}}$ is a local frame of $T^{1,0}\mathbb{C}H^n$ and
the Christoffel symbols are given by~\cite{Matsumoto-16}*{Equations (5.2)}.
We compute the action of $\mathcal{E}_g'$ using this frame.
Then we obtain, for example,
\begin{equation*}
	\tensor{(\mathcal{E}_g'\sigma)}{_\alpha_\beta}
	=-\frac{1}{4}x\partial_x(x\partial_x-2n)\tensor{\sigma}{_\alpha_\beta}
	+O(x)\cdot(\text{derivatives of $\sigma$ in the $\mathcal{H}^{2n-1}$-direction}).
\end{equation*}
One can conclude from this that the indicial roots appearing from the component
$S_\mathbb{C}^2(\mathbb{C}^{n-1})^*\to S_\mathbb{C}^2(\mathbb{C}^{n-1})^*$ are $0$ and $2n$,
and the other roots can also be read off from \cite{Matsumoto-16}*{Equations (5.9)} similarly.

Anyway the indicial radius of $\mathcal{E}'_g$ is $n$ and, by Proposition \ref{prop:apriori},
the mappings \eqref{eq:Holder-decay} and \eqref{eq:Sobolev-decay} for the linearized gauged Einstein
operator are isomorphic for $\abs{\delta}<n$ if the $L^2$ kernel vanishes.
Proposition \ref{prop:possibility-of-deformation} follows by taking $\delta$ close to $-n$.
We also remark that the conclusion of Proposition \ref{prop:apriori} for $\delta>0$ can be seen as giving
an improved decay of the elements of $\ker_{(2)}\mathcal{E}_g'$, which has applications in the proof of
the main theorem for $n=3$ and in the appendix.

\section{Infinitesimal Einstein deformations and $L^2$ cohomology}
\label{sec:infinitesimal-Einstein-deformaions}

\subsection{Infinitesimal Einstein deformations on K\"ahler manifolds}

Let us consider the linearized gauged Einstein operator $\mathcal{E}_g'$ of
a complete Einstein K\"ahler metric $g$ with Einstein constant $\lambda<0$ defined on a complex manifold
$\Omega$ with dimension $n$.
Thanks to the complex structure, any symmetric 2-tensor $\sigma\in\Gamma(S^2T^*\Omega)$ decomposes
into the sum of the Hermitian and the anti-Hermitian parts: $\sigma=\sigma_H+\sigma_A$.
By definition, the two summands satisfy
\begin{equation*}
	\sigma_H(J\cdot,J\cdot)=\sigma_H(\cdot,\cdot)\quad\text{and}\quad
	\sigma_A(J\cdot,J\cdot)=-\sigma_A(\cdot,\cdot),
\end{equation*}
where $J$ denotes the almost complex structure endomorphism.
The Hermitian (resp.~anti-Hermitian) part of $S^2T^*\Omega$ will be denoted by
$S_H^2T^*\Omega$ (resp.~$S_A^2T^*\Omega$).
This decomposition is respected by $\mathcal{E}_g'$,
for the curvature of the K\"ahler metric has only components of
the type $\tensor{R}{_i_{\conj{j}}_k_{\conj{l}}}$.

We discuss the action of $\mathcal{E}_g'$ on each component based on
Koiso's observation~\cite{Koiso-83}*{Section 7} (see also Besse~\cite{Besse-87}*{Section 12.J}).
First, we identify any Hermitian symmetric form $\sigma_H$ with the differential $(1,1)$-form
$\sigma_H(\cdot,J\cdot)$, which is denoted by $\sigma_H\circ J$.
Then the action of $\mathcal{E}_g'$ on the Hermitian part is related to that of the Hodge--de Rham Laplacian
$\Delta_d$ as follows:
\begin{equation*}
	(\mathcal{E}_g'\sigma_H)\circ J=\frac{1}{2}(\Delta_d-2\lambda)(\sigma_H\circ J).
\end{equation*}
On complete manifolds, by a result of Gaffney~\cite{Gaffney-55} it is known that
$\Delta_d$ is essentially self-adjoint, meaning that it has unique self-adjoint extension.
In particular, the maximal closed extension of $\Delta_d$ agrees with $dd^*+d^*d$ (where $d$ also acts
distributionally).
Therefore, since $\lambda<0$, it follows that
\begin{equation}
	\label{eq:linearized-Einstein-hermitian}
	\ker_{(2)}\mathcal{E}_g'\cap L^2(S^2_HT^*\Omega)=0.
\end{equation}
Second, the action of $\mathcal{E}_g'$ on the anti-Hermitian part $\sigma_A$ is reinterpreted as follows.
Let $\sigma_A=\sigma_A^{2,0}+\sigma_A^{0,2}$ be the type decomposition of $\sigma_A$,
and we identify $\sigma_A^{0,2}$ through the metric duality with a
$(0,1)$-form with values in $T^{1,0}=T^{1,0}\Omega$, which is denoted by $g^{-1}\circ\sigma_A^{0,2}$. Then
\begin{equation}
	\label{eq:linearized-Einstein-anti-hermitian}
	g^{-1}\circ(\mathcal{E}_g'\sigma_A)^{0,2}=\frac{1}{2}\Delta_\partialbar(g^{-1}\circ\sigma_A^{0,2}).
\end{equation}
By \eqref{eq:linearized-Einstein-hermitian} and \eqref{eq:linearized-Einstein-anti-hermitian},
we have a natural identification
\begin{equation}
	\label{eq:reduction-to-harmonics}
	\ker_{(2)}\mathcal{E}_g'\cong \mathcal{H}_{(2)}^{0,1}(T^{1,0}),
\end{equation}
where $\mathcal{H}_{(2)}^{0,1}(T^{1,0})$ is the space of $L^2$ harmonic $T^{1,0}$-valued $(0,1)$-forms:
\begin{equation*}
	\mathcal{H}_{(2)}^{0,1}(T^{1,0})
	:=\set{\alpha\in L^2\spacewedge^{0,1}(T^{1,0})|\text{$\partialbar\alpha=0$, $\partialbar^*\alpha=0$}}
	=\set{\alpha\in L^2\spacewedge^{0,1}(T^{1,0})|\Delta_\partialbar\alpha=0}.
\end{equation*}
The latter equality follows from the essential self-adjointness of $\Delta_\partialbar$
due to Chernoff~\cite{Chernoff-73}.

\subsection{Reduction to $L^2$ cohomology}
\label{subsec:reduction-to-L2-cohomology}

The Hodge--Kodaira decomposition on noncompact Hermitian manifolds reads as follows, where
$E$ is an arbitrary Hermitian holomorphic vector bundle:
\begin{equation*}
	L^2\spacewedge^{p,q}(\Omega;E)
	=\mathcal{H}_{(2)}^{p,q}(\Omega;E)\oplus\overline{\raisebox{1pt}{\mathstrut}\im\partialbar_{p,q-1}}
	\oplus\overline{\raisebox{1pt}{\mathstrut}\im\partialbar_{p,q}^*};
\end{equation*}
here
\begin{equation*}
	\partialbar=\partialbar_{p,q}\colon
	L^2\spacewedge^{p,q}(\Omega;E)\to L^2\spacewedge^{p,q+1}(\Omega;E)
\end{equation*}
is the maximal closed extension of $\partialbar$ acting on compactly supported smooth $(p,q)$-forms.
Therefore, the space $\mathcal{H}_{(2)}^{p,q}(\Omega;E)$ is isomorphic to the so-called \emph{reduced}
$L^2$ cohomology:
\begin{equation*}
	\mathcal{H}_{(2)}^{p,q}(\Omega;E)\cong
	H_{(2), \mathrm{red}}^{p,q}(\Omega;E)
	:=\ker\partialbar_{p,q}/\overline{\raisebox{1pt}{\mathstrut}\im\partialbar_{p,q-1}}.
\end{equation*}
The reduced cohomology can be different from the usual $L^2$ cohomology
\begin{equation*}
	H_{(2)}^{p,q}(\Omega;E):=\ker\partialbar_{p,q}/\im\partialbar_{p,q-1},
\end{equation*}
but it is clear that
$H_{(2)}^{p,q}(\Omega;E)=0$ implies $H_{(2), \mathrm{red}}^{p,q}(\Omega;E)=0$.
Therefore we can consider the usual $L^2$ cohomology to get a result on harmonic forms.

Let us recall an exact sequence for $L^2$ cohomologies.
Since an inclusion $K\subset K'$ between compact subsets of $\Omega$ induces a homomorphism
$H_{(2)}^{p,q}(\Omega\setminus K;E)\to H_{(2)}^{p,q}(\Omega\setminus K';E)$ by restriction,
we may define the inductive limit
\begin{equation*}
	\varinjlim_K H_{(2)}^{p,q}(\Omega\setminus K;E),
\end{equation*}
where $K$ runs through the compact subsets of $\Omega$.
Then we have the following exact sequence (cf.~Ohsawa~\cite{Ohsawa-91}):
\begin{equation*}
	\dotsb
	\to H_c^{p,q}(\Omega;E)\to H_{(2)}^{p,q}(\Omega;E)
	\to\varinjlim_K H_{(2)}^{p,q}(\Omega\setminus K;E)
	\to H_c^{p,q+1}(\Omega;E)\to\dotsb.
\end{equation*}
Here $H_c^{p,q}(\Omega;E)$ denotes the cohomology with compact support.

Now suppose that $\Omega$ is a Stein manifold.
Then $H^{0,1}_c(\Omega;E)$ vanishes for any holomorphic vector bundle $E$,
because it is the dual vector space of $H^{n,n-1}(\Omega;E^*)$, which vanishes by the Oka--Cartan theorem
(alternatively, Andreotti--Vesentini \cite{Andreotti-Vesentini-65}*{Theorem 5} gives
a direct differential-geometric proof).
Therefore, by \eqref{eq:reduction-to-harmonics} and the exact sequence above,
the vanishing of $\ker_{(2)}\mathcal{E}_g'$ follows once
\begin{equation}
	\label{eq:reduction-to-boundary-cohomology}
	\varinjlim_K H_{(2)}^{0,1}(\Omega\setminus K;T^{1,0})=0
\end{equation}
is shown.

Let us further suppose that $g$ is an ACH K\"ahler-Einstein metric.
In this case, we apply Proposition \ref{prop:apriori} to show that
$\ker_{(2)}\mathcal{E}_g'$ actually lies in the weighted $L^2$-space
$L^2_\delta(S^2T^*\Omega)$ for $0<\delta<n$, which implies that
\begin{equation}
	\label{eq:apriori-decay-for-complex-deformation}
	\mathcal{H}_{(2)}^{0,1}(\Omega;T^{1,0})\subset L^2_\delta\spacewedge^{0,1}(\Omega;T^{1,0}).
\end{equation}
Thus we are led to considering the weighted $L^2$ cohomology.
The vanishing of the weighted cohomology $H_{(2), \delta}^{0,1}(\Omega;T^{1,0})$ follows from
\begin{equation*}
	\varinjlim_K H_{(2), \delta}^{0,1}(\Omega\setminus K;T^{1,0})=0
\end{equation*}
because the weighted cohomology $H_{(2), \delta}^{p,q}(\Omega;T^{1,0})$ is nothing but
$H_{(2)}^{p,q}(\Omega;E_\delta)$, where $E_\delta$ denotes the vector bundle $T^{1,0}$ equipped with the metric
$\varphi^{-\delta}g$.
Now suppose that $H_{(2), \delta}^{0,1}(\Omega;T^{1,0})=0$ is known, and
take any element $\alpha\in \mathcal{H}_{(2)}^{0,1}(\Omega;T^{1,0})$.
From \eqref{eq:apriori-decay-for-complex-deformation} it follows that
$\alpha\in L^2_\delta\spacewedge^{0,1}(\Omega;T^{1,0})$,
and at the same time we have $\partialbar\alpha=0$.
Hence, by the assumption,
there is some $\beta\in L^2_\delta(\Omega;T^{1,0})$ for which $\alpha=\partialbar\beta$.
Then it turns out that $\beta$ also belongs to $L^2(\Omega;T^{1,0})$.
Now since $\partialbar^*\alpha=0$, we obtain $\partialbar^*\partialbar\beta=0$,
which implies $(\partialbar\beta,\partialbar\beta)=0$ and hence $\alpha=\partialbar\beta=0$.
Thus we can conclude that $\mathcal{H}_{(2)}^{0,1}(\Omega;T^{1,0})$ vanishes, and so does the obstruction
space $\ker_{(2)}\mathcal{E}_g'$.

\section{Proof of main theorem}
\label{sec:proof-main-theorem}

We shall prove that
\begin{equation}
	\label{eq:reduction-to-boundary-weighted-cohomology}
	\varinjlim_K H_{(2), \delta}^{0,1}(\Omega\setminus K;T^{1,0})=0
\end{equation}
holds for $n\ge 3$;
thus our main theorem follows by the discussion in the previous section and
Proposition \ref{prop:possibility-of-deformation}.
In the course of the proof of \eqref{eq:reduction-to-boundary-weighted-cohomology},
we will also see that \eqref{eq:reduction-to-boundary-cohomology} holds when $n\ge 4$.
Therefore, the only case one really has to consider the weighted cohomology is when $n=3$.

Since the $L^2$ cohomology is invariant for quasi-equivalent metrics,
we can replace the Cheng--Yau metric $g$ with the metric $\tilde{g}$ expressed as
$\tensor{\tilde{g}}{_i_{\conj{j}}}=\partial_i\partial_{\conj{j}}(-\log\tilde{\varphi})$,
where $\tilde{\varphi}\in C^\infty(\overline{\Omega})$ is a \emph{smooth} positive defining function.
This simplification avoids annoying differentiability issues.
In what follows, we omit tildes: $\tilde{g}$ and $\tilde{\varphi}$ are simply denoted by
$g$ and $\varphi$, respectively.

\subsection{Preliminary considerations}

We define $\mathcal{U}_\rho:=\set{0<\varphi<\rho}\subset\Omega$ for small $\rho>0$
so that $M_\rho=\set{\varphi=\rho}$ is smooth.
What we prove in this section is actually the following,
which is supposedly stronger than \eqref{eq:reduction-to-boundary-weighted-cohomology}.

\begin{prop}
	\label{prop:boundary-vanishing}
	Let $n\ge 3$. For any positive number $\delta>0$, if $\rho>0$ is sufficiently small then
	\begin{equation}
		\label{eq:boundary-weighted-vanishing}
		H_{(2), \delta}^{0,1}(\mathcal{U}_\rho;T^{1,0})=0
	\end{equation}
	holds.
\end{prop}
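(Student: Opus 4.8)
The plan is to establish the vanishing of $L^2_\delta H^{0,1}(\mathcal{U}_\rho;T^{1,0})$ by a Bochner-type argument built on the Morrey--Kohn--H\"ormander identity, adapted to the $T^{1,0}$-valued, weighted setting. First I would fix a $\partialbar$-closed form $\alpha\in L^2_\delta\spacewedge^{0,1}(\mathcal{U}_\rho;T^{1,0})$ and seek $\beta$ with $\partialbar\beta=\alpha$ in the same weighted space; by the standard $L^2$ existence theory for $\partialbar$ (H\"ormander), it suffices to prove a basic a priori estimate of the form $\norm{u}^2\le C(\norm{\partialbar u}^2+\norm{\partialbar^*u}^2)$ for $u\in\spacewedge^{0,1}(\mathcal{U}_\rho;T^{1,0})$ in the domains of $\partialbar$ and $\partialbar^*$, with respect to the metric $\varphi^{-\delta}g$ on $T^{1,0}$ (equivalently, with the weight $\varphi^{-\delta}$ inserted into the $L^2$ inner products). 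The Morrey--Kohn--H\"ormander formula rewrites this quantity as $\int\abs{\partialbar u}^2+\abs{\partialbar^*u}^2 = \int\abs{\nabla^{0,1}u}^2 + (\text{curvature term})$, where the curvature term is the pairing of $u$ with the curvature of the twisting bundle $\Lambda^{0,1}\otimes T^{1,0}$ computed in the metric $g$, \emph{plus} the extra Hessian-of-weight contribution $\partial\partialbar(-\delta\log\varphi)$ coming from the twist by $\varphi^{-\delta}$.

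Next I would analyze the curvature term asymptotically near $\bdry\Omega$. The key input is the asymptotic curvature behavior \eqref{eq:asymptotic-curvature-behavior}: as $\varphi\to 0$, the curvature tensor of $g$ converges uniformly to that of the complex hyperbolic metric, $\tensor{R}{_i_{\conj{j}}_k_{\conj{l}}}=-(\tensor{g}{_i_{\conj{j}}}\tensor{g}{_k_{\conj{l}}}+\tensor{g}{_i_{\conj{l}}}\tensor{g}{_k_{\conj{j}}})+o(1)$. Plugging this model curvature into the Bochner term for a $T^{1,0}$-valued $(0,1)$-form gives an explicit constant — morally a combination of $-\Ric$ acting on the $(0,1)$ slot (contributing $+(n+1)$ from $\Ric(g)=-(n+1)g$) against the curvature of $T^{1,0}$ acting on the values (contributing $-2$ from the holomorphic sectional curvature $-2$), with the precise coefficient depending on $n$. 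This is exactly where the dimension restriction $n\ge 3$ (versus $n\ge 4$ without the weight) enters: the model constant is positive precisely for $n\ge 4$, borderline zero at $n=3$, and the weight term $\partial\partialbar(-\delta\log\varphi)$, which is comparable to $+\delta\, g$ near the boundary (since $-\log\varphi$ is, up to lower order, the K\"ahler potential of $g$), shifts the effective constant by a positive amount proportional to $\delta$, thereby rescuing the case $n=3$ for any $\delta>0$. I would carry this out by choosing $\rho$ small enough that the $o(1)$ error in \eqref{eq:asymptotic-curvature-behavior} and the lower-order discrepancy between $\partial\partialbar(-\delta\log\varphi)$ and $\delta g$ are both dominated by the positive model constant (or by $\delta$ itself when $n=3$).

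The remaining steps are to handle the boundary term in the integration by parts underlying the Morrey--Kohn--H\"ormander identity and to promote the a priori estimate to the genuine solvability statement. For the boundary term: since $\mathcal{U}_\rho$ has two ``ends'' — the honest boundary $M_\rho=\set{\varphi=\rho}$ and the infinity $\set{\varphi\to 0}$ — I would work with the Dirichlet-type realization of $\partialbar$ (forms in the closure of compactly-supported-near-infinity forms, so no boundary contribution at $M_\rho$ from $\partialbar^*$), exactly as in the classical $\partialbar$-Neumann setup on pseudoconvex domains; the strict pseudoconvexity of the level sets $M_\rho$ makes the resulting boundary integral have a favorable sign, so it can be discarded. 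At the infinity end there is no boundary, and the weight $\varphi^{-\delta}$ with $\delta>0$ ensures all integrations by parts are legitimate for forms in the relevant domains. With the a priori estimate in hand, H\"ormander's theorem yields $\beta\in L^2_\delta(\mathcal{U}_\rho;T^{1,0})$ with $\partialbar\beta=\alpha$, proving \eqref{eq:boundary-weighted-vanishing}; taking $\delta$ as large as one likes (indeed the estimate only improves) also recovers the unweighted statement when $n\ge 4$. The main obstacle I anticipate is bookkeeping the exact positive constant in the curvature term for general $n$ and confirming its sign at $n=3,4$ — i.e., carefully extracting the coefficient from the contraction of the model curvature tensor against a $\Lambda^{0,1}\otimes T^{1,0}$-valued form — and, secondarily, making the comparison $\partial\partialbar(-\delta\log\varphi)\sim\delta g$ precise enough (with controlled lower-order error uniformly as $\varphi\to 0$) that it genuinely dominates the $o(1)$ curvature error on $\mathcal{U}_\rho$ for small $\rho$.
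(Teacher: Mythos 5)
Your overall strategy (reduce to the basic estimate for $\partialbar\oplus\partialbar^*$ in the weighted spaces, feed in the asymptotic curvature behavior \eqref{eq:asymptotic-curvature-behavior}, and let the weight rescue $n=3$) matches the paper's, but there is a genuine gap exactly where you declare the boundary term routine. The hypersurface $M_\rho=\set{\varphi=\rho}$ is strictly pseudoconvex as the boundary of the compact core $\set{\varphi>\rho}$, but $\mathcal{U}_\rho$ lies on the \emph{other} side, so $M_\rho$ is strictly \emph{pseudoconcave} as the boundary of $\mathcal{U}_\rho$, and the Levi-form boundary integral in the Morrey--Kohn--H\"ormander identity comes out with the unfavorable sign, roughly $-q\norm{\alpha}_b^2$. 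It cannot be discarded. Nor can you evade it by a Dirichlet-type realization supported away from $M_\rho$: the cohomology that must vanish in order to feed the inductive limit \eqref{eq:reduction-to-boundary-weighted-cohomology} is that of the maximal extension of $\partialbar$ on $\mathcal{U}_\rho$, and the Hilbert adjoint of that operator imposes precisely the $\partialbar$-Neumann condition \eqref{eq:Neumann-condition} on $M_\rho$; after a density lemma, the basic estimate must be proved on that domain, boundary term included.

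The interior constant is also not what the identity you propose would give. In the plain Morrey--Kohn--H\"ormander (Andreotti--Vesentini) identity, the curvature enters only through $\Ric^\circ$ and $\mathring{S}$ contracted against the $(0,1)$-slots, contributing $-q(n+1)\norm{\alpha}^2$ from the Ricci term, about $-2q\norm{\alpha}^2$ from the curvature of $T^{1,0}$, and $-q\delta\norm{\alpha}^2$ from the weight (note that $\varphi^{-\delta}$ is a pluri\emph{super}harmonic weight: its curvature is $-\delta g$), so every term is negative and the weight makes matters worse, for all $n$. Both defects are cured simultaneously by the step you are missing, namely H\"ormander's condition-$Z(q)$ device: split $\norm{\nabla''\alpha}^2$ into tangential and normal parts and convert the tangential piece $\norm{\nabla''_b\alpha}^2$ into $\norm{\nabla'_b\alpha}^2$ by a further integration by parts. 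This trades $\Ric^\circ$ and $\mathring{S}$ for $-\tr_gS$ and $S(\xi,\conj{\xi})$, generates the compensating positive boundary term $(n-1)\norm{\alpha}_b^2$, and, after absorbing the cross term $-2(n-1)\re(\nabla_{\conj{\xi}}\alpha,\alpha)$ by Cauchy--Schwarz against $\norm{\nabla_{\conj{\xi}}\alpha}^2$, yields the coefficients $(2n-3q-3)+(n-q-1)\delta$ for $\norm{\alpha}^2$ and $n-q-1$ for $\norm{\alpha}_b^2$. Only at that point does one see positivity for $q=1$ when $n\ge4$ without the weight and when $n=3$ with any $\delta>0$; the computation also requires the asymptotics \eqref{eq:divergence-of-unit-normal} and \eqref{eq:bracket-of-unit-normal} of $\div\xi$ and $[\xi,\conj{\xi}]$, which produce the factor $n-1$ and are absent from your outline.
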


This claims the solvability of a $\partialbar$-equation
on a complete manifold under the presence of boundary. The proof reduces to establishing the estimate below
(see H\"ormander~\cite{Hormander-65}*{Theorem 1.1.4} or~\cite{Hormander-90}*{Theorem 4.1.1}).

\begin{prop}
	\label{prop:boundary-weighted-estimate}
	Let $n\ge 3$ and $\delta>0$. For sufficiently small $\rho>0$, there exists a constant $C>0$ such that
	\begin{equation}
		\label{eq:boundary-weighted-estimate}
		\norm{\alpha}^2\le C(\norm{\partialbar\alpha}^2+\norm{\partialbar^*\alpha}^2),
		\qquad \alpha\in\dom\partialbar\cap\dom\partialbar^*
		\subset L^2_\delta\spacewedge^{0,1}(\mathcal{U}_\rho;T^{1,0}).
	\end{equation}
\end{prop}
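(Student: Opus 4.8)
The plan is to prove the basic estimate \eqref{eq:boundary-weighted-estimate} by the Bochner method familiar from the classical $\partialbar$-Neumann problem. First I would recast it: regarding $E_\delta:=(T^{1,0},\varphi^{-\delta}g)$ as a hermitian holomorphic vector bundle, one has $L^2_\delta\spacewedge^{0,1}(\mathcal{U}_\rho;T^{1,0})=L^2\spacewedge^{0,1}(\mathcal{U}_\rho;E_\delta)$, so that \eqref{eq:boundary-weighted-estimate} is exactly the basic estimate for $\partialbar$ on $E_\delta$-valued $(0,1)$-forms over $\mathcal{U}_\rho$. The Chern curvature acquires the extra term $i\Theta(E_\delta)=i\Theta(T^{1,0})-\delta\omega_g\otimes\mathrm{id}_{E_\delta}$, with $\omega_g$ the K\"ahler form, up to a lower-order error coming from the $\tensor{h}{_i_{\conj{j}}}$ in \eqref{eq:Cheng-Yau-metric}. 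Since wedging with $\omega_g$ has commutator $(p+q-n)\,\mathrm{id}$ with its adjoint $\Lambda$ on $(p,q)$-forms, hence $(1-n)\,\mathrm{id}$ on $(0,1)$-forms, the \emph{negative} scalar twist $\varphi^{-\delta}$ adds $+(n-1)\delta$ to the curvature endomorphism $[i\Theta(E_\delta),\Lambda]$ acting on $\spacewedge^{0,1}\otimes E_\delta$; this is the positivity that will drive the estimate.

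Next I would write the Morrey--Kohn--H\"ormander identity for $\alpha\in\dom\partialbar\cap\dom\partialbar^*$ (first for $\alpha$ smooth up to $M_\rho$, then by density): $\norm{\partialbar\alpha}^2+\norm{\partialbar^*\alpha}^2$ equals a nonnegative tangential-gradient term, plus $\int_{\mathcal{U}_\rho}\langle[i\Theta(E_\delta),\Lambda]\alpha,\alpha\rangle$, plus a boundary integral over $M_\rho$ built from the Levi form of that hypersurface. For the interior integral I would use \eqref{eq:asymptotic-curvature-behavior}: fixing $\delta$ first and then taking $\rho$ small makes the curvature of $g$ on $\mathcal{U}_\rho$ uniformly close to that of $\mathbb{C}H^n$, and a direct computation on $\mathbb{C}H^n$ (from the curvature tensor displayed in \eqref{eq:asymptotic-curvature-behavior}) shows $\langle[i\Theta(E_\delta),\Lambda]\alpha,\alpha\rangle\ge(n-1)\delta\,\abs{\alpha}^2$ pointwise --- the unweighted endomorphism $[i\Theta(T^{1,0}),\Lambda]$ being merely nonnegative there, with a one-dimensional ``trace'' kernel at each point, which is precisely why the weight is indispensable. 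Hence $\int_{\mathcal{U}_\rho}\langle[i\Theta(E_\delta),\Lambda]\alpha,\alpha\rangle\ge\tfrac12(n-1)\delta\norm{\alpha}^2$ once $\rho$ is small.

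The main obstacle is the boundary integral, and it is where the restriction $n\ge 3$ is forced. The hypersurface $M_\rho=\set{\varphi=\rho}$ bounds $\mathcal{U}_\rho=\set{\varphi<\rho}$ from the side $\varphi<\rho$, and because $\Omega$ is strictly pseudoconvex the complex Hessian $\partial\partialbar\varphi$ is negative definite on the complex tangent space of $M_\rho$ for small $\rho$; thus $M_\rho$ is strictly \emph{pseudoconcave} as a boundary of $\mathcal{U}_\rho$, and the boundary integral carries the \emph{unfavorable} sign, so it cannot simply be discarded. The remedy is that at a strictly pseudoconcave boundary the $\partialbar$-complex on $(0,1)$-forms satisfies H\"ormander's condition $Z(1)$ exactly when $n\ge 3$ (the Levi form then has at least two negative eigenvalues); this gives the $\tfrac12$-subelliptic estimate near $M_\rho$, and meshing it with the interior positivity above closes \eqref{eq:boundary-weighted-estimate}. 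For $n=2$, $Z(1)$ fails at a pseudoconcave hypersurface, which is why the method --- like Theorem~\ref{thm:main} --- does not extend to $n=2$. The same scheme, run without the weight, proves the unweighted statement \eqref{eq:reduction-to-boundary-cohomology} when $n\ge 4$, where $Z(1)$ holds with slack, which together with the tangential-gradient terms suffices to handle the degeneracy of $[i\Theta(T^{1,0}),\Lambda]$; only at the borderline $n=3$ is the $(n-1)\delta=2\delta$ gain genuinely needed. I expect the two delicate points to be the sign bookkeeping in the curvature contraction --- where the negativity of the Ricci curvature and of $T^{1,0}$ on $\mathbb{C}H^n$ interacts with the form degree --- and making the subelliptic estimate from $Z(1)$ interact cleanly with the global Bochner identity and with the weight in a neighborhood of $M_\rho$.
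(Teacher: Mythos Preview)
Your high-level strategy matches the paper's: absorb the weight into the fiber metric $E_\delta=(T^{1,0},\varphi^{-\delta}g)$, exploit that the pseudoconcave boundary $M_\rho$ satisfies H\"ormander's condition $Z(1)$ precisely when $n\ge 3$, and observe that the weight is only genuinely needed at the borderline $n=3$. All of that is correct and is exactly what the paper does.

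The gap is in the mechanism you invoke at the boundary. The $\tfrac12$-subelliptic estimate is the wrong consequence of $Z(1)$ here: it bounds $\norm{\alpha}_{1/2}^2$ by $\norm{\partialbar\alpha}^2+\norm{\partialbar^*\alpha}^2+C\norm{\alpha}^2$, with $\norm{\alpha}^2$ on the wrong side, so no amount of ``meshing'' with the interior curvature positivity will absorb it and close \eqref{eq:boundary-weighted-estimate}. What $Z(q)$ actually delivers for the \emph{basic} estimate is H\"ormander's integration-by-parts trick: one converts the tangential $\nabla''$-derivatives to $\nabla'$-derivatives, which flips the sign of the Levi-form boundary integral. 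The paper carries this out explicitly in Lemma~\ref{lem:Hormander-technique}, replacing $\norm{\nabla''_b\alpha}^2$ by $\norm{\nabla'_b\alpha}^2$; this converts the boundary term in the Andreotti--Vesentini identity from $-q\norm{\alpha}_b^2$ to $(n-q-1)\norm{\alpha}_b^2$, which is nonnegative for $q=1$, $n\ge 3$ and can simply be dropped. The normal-direction term $\norm{\nabla_{\conj{\xi}}\alpha}^2$ is kept and the cross-term $2(n-1)\re(\nabla_{\conj{\xi}}\alpha,\alpha)$ is handled by Cauchy--Schwarz, yielding the final inequality \eqref{eq:final-general-estimate}. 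Specializing to $E_\delta$ with $S=R-\delta g\otimes I$ then gives, for $q=1$, the coefficient $(2n-6)+(n-2)\delta$ in front of $\norm{\alpha}^2$, which explains both the threshold $n\ge 3$ and why $\delta>0$ is needed only at $n=3$.

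A secondary issue: the identity you write down --- a Morrey--Kohn--H\"ormander formula with curvature term $\langle[i\Theta(E_\delta),\Lambda]\alpha,\alpha\rangle$ --- conflates two different Weitzenb\"ock-type formulas. The Andreotti--Vesentini (MKH) identity has first-order term $\norm{\nabla''\alpha}^2$ and curvature term $\Ric^\circ+\mathring{S}$; the Bochner--Kodaira--Nakano identity has first-order term $\norm{\partial^E\alpha}^2+\norm{(\partial^E)^*\alpha}^2$ and curvature term $[i\Theta(E),\Lambda]$. Neither one directly has a nonnegative ``tangential-gradient'' term together with the BKN curvature contraction and a boundary integral; the hybrid you need is precisely what Lemma~\ref{lem:Hormander-technique} produces by converting only the tangential part of $\nabla''$.
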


As remarked in the previous section, we may incorporate the weight
into the fiber metric of $T^{1,0}$. Therefore we shall present the necessary computation
for differential forms on $\Omega$ with values in an arbitrary Hermitian holomorphic vector bundle $E$.
In addition, we consider differential $(0,q)$-forms in general, $0\le q\le n$, to make the situation
clearer.

While the domain of $\partialbar=\partialbar_{0,q}$ contains the space
$C^\infty_c\spacewedge^{0,q}(\overline{\mathcal{U}}_\rho;E)$ of $E$-valued smooth $(0,q)$-forms
with compact support in $\overline{\mathcal{U}}_\rho=\set{0<\varphi\le\rho}$ as a subspace,
the domain of $\partialbar^*=\partialbar_{0,q-1}^*$ does not
(unless $q=0$, for which $\partialbar^*$ is trivial). We define
\begin{equation*}
	\mathcal{D}^{0,q}(\overline{\mathcal{U}}_\rho;E)
	:=C^\infty_c\spacewedge^{0,q}(\overline{\mathcal{U}}_\delta;E)\cap\dom\partialbar^*.
\end{equation*}
This space is described as follows.
Let $\xi$ be the $(1,0)$-vector field on $\overline{\mathcal{U}}_\rho$ such that,
for each $0<c\le\rho$, its restriction $\xi|_{M_c}$ along the level set $M_c=\set{\varphi=c}$
is the unit normal vector field pointing toward $\bdry\Omega$.
Then $\alpha\in C^\infty_c\spacewedge^{0,q}(\overline{\mathcal{U}}_\rho;E)$ belongs to
$\mathcal{D}^{0,q}(\overline{\mathcal{U}}_\rho;E)$ if and only if
\begin{equation}
	\label{eq:Neumann-condition}
	\iota_{\conj{\xi}}\alpha=0\quad\text{on $M_\rho$}.
\end{equation}
The following lemma shows that it suffices to establish the estimates for elements of
$\mathcal{D}^{0,q}(\overline{\mathcal{U}}_\rho;E)$.

\begin{lem}
	\label{lem:density}
	The space $\mathcal{D}^{0,q}(\overline{\mathcal{U}}_\rho;E)$ is dense in
	$\dom\partialbar\cap\dom\partialbar^*\subset L^2\spacewedge^{0,q}(\mathcal{U}_\rho;E)$
	with respect to the graph norm
	$\alpha\mapsto(\norm{\alpha}^2+\norm{\partialbar\alpha}^2+\norm{\partialbar^*\alpha}^2)^{1/2}$.
\end{lem}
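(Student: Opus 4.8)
The plan is to use the standard Friedrichs-type density argument adapted to the boundary $\partialbar$-Neumann setting, which separates the issue into two independent parts: approximation near the \emph{finite} boundary $M_\rho$, and approximation near \emph{infinity} (i.e., $\bdry\Omega$, where $\mathcal{U}_\rho$ is complete). First I would take an arbitrary $\alpha\in\dom\partialbar\cap\dom\partialbar^*$ and split it using a cutoff: fix a smooth function $\eta_R$ that is $1$ on $\set{\varphi\ge 1/R}\cap\set{\varphi\le\rho/2}$ and supported in $\mathcal{U}_\rho$, chosen (using the completeness of $g$ near $\bdry\Omega$ and the fact that $g$ has bounded geometry, so that $\abs{d\varphi}_g\to 0$ at $\bdry\Omega$) so that $\abs{d\eta_R}_g\to 0$ uniformly as $R\to\infty$. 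Since $\partialbar(\eta_R\alpha)=\eta_R\partialbar\alpha+\partialbar\eta_R\spacewedge\alpha$ and $\partialbar^*(\eta_R\alpha)=\eta_R\partialbar^*\alpha-i_{(\partialbar\eta_R)^\sharp}\alpha$ (interior multiplication being an algebraic operation that preserves $\dom\partialbar^*$), the terms involving $d\eta_R$ tend to zero in $L^2$, so $\eta_R\alpha\to\alpha$ in graph norm and $\eta_R\alpha\in\dom\partialbar^*$ still. This reduces the problem to $\alpha$ compactly supported in $\Omega$ but still only in $L^2$, with support possibly reaching up to (but not past) $M_\rho$.

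Next I would mollify such a compactly supported $\alpha$. On the part of the support bounded away from $M_\rho$ this is routine: convolution in local geodesic coordinates (using the bounded-geometry charts of \eqref{eq:bounded-geometry-coordinates}) produces smooth forms converging in graph norm, by Friedrichs' lemma, since $\partialbar$ and $\partialbar^*$ are first-order operators with smooth coefficients and the commutator of mollification with such an operator is bounded on $L^2$ and tends to zero strongly. The only subtlety is near $M_\rho$: here one must regularize while \emph{preserving} the boundary condition \eqref{eq:Neumann-condition} in the limit. The standard device is to first translate the form slightly into the interior $\set{\varphi<\rho-\epsilon}$ before mollifying—using a one-parameter family of diffeomorphisms flowing in the $+\xi\ (\text{i.e. }-\operatorname{grad}\varphi)$ direction near $M_\rho$—so that the mollified form is smooth up to $M_\rho$ and one checks that $i_{\conj\xi}\alpha=0$ holds on $M_\rho$ in the limit because the pulled-back forms vanish identically in a neighborhood of $M_\rho$ before being smoothed, and the translation converges to the identity in graph norm by the same commutator estimate (translation commutes with $\partialbar,\partialbar^*$ up to lower-order terms controlled by the $C^1$-norm of the vector field generating the flow, which is bounded since $M_\rho$ is smooth and compact).

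The main obstacle I expect is precisely this last point: verifying that the boundary condition $i_{\conj\xi}\alpha=0$ is not destroyed by regularization and is genuinely recovered in the limit. The cleanest way to handle it is to note that $\dom\partialbar^*$ is characterized by the vanishing of the boundary pairing $\int_{M_\rho}\langle i_{\conj\xi}\alpha,\beta\rangle$ against all $\beta\in C^\infty_c\spacewedge^{0,q-1}(\overline{\mathcal U}_\rho;E)$, which is a closed condition under graph-norm convergence once one has an integration-by-parts (Green) formula valid for the approximants; so it suffices to arrange that each approximant lies in $\mathcal{D}^{0,q}$ and that graph-norm limits of elements of $\dom\partialbar^*$ stay in $\dom\partialbar^*$ (true, as $\partialbar^*$ is closed). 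Combining the interior-away-from-$M_\rho$ mollification with the translate-then-mollify procedure near $M_\rho$ via a partition of unity subordinate to $\set{\varphi<3\rho/4}$ and $\set{\varphi>\rho/2}$ then yields an approximating sequence in $\mathcal{D}^{0,q}(\overline{\mathcal U}_\rho;E)$, completing the proof. This is entirely parallel to the classical density lemma in the $\partialbar$-Neumann problem (see H\"ormander~\cite{Hormander-65}), the only new ingredient being the cutoff toward $\bdry\Omega$, which works because $g$ is complete there.
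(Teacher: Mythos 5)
Your proof is correct and follows essentially the same route as the paper's: a splitting into a piece escaping to $\bdry\Omega$, handled by the completeness cutoff (the paper cites \cite{Andreotti-Vesentini-65}*{Lemma 4}), and a piece near $M_\rho$, handled by the translate-then-mollify regularization that preserves the $\partialbar$-Neumann condition \eqref{eq:Neumann-condition} (the paper simply cites \cite{Hormander-65}*{Proposition 1.2.4} after localizing to a boundary chart). One small slip in your write-up: as stated, $\eta_R$ equals $1$ only on $\set{\varphi\ge 1/R}\cap\set{\varphi\le\rho/2}$ and is supported in the open set $\mathcal{U}_\rho$, so $\eta_R\alpha$ would never agree with $\alpha$ on $\set{\rho/2<\varphi<\rho}$ and could not converge to it; you want $\eta_R\equiv 1$ on all of $\set{\varphi\ge 1/R}$ up to and including $M_\rho$, which is evidently what the rest of your argument uses.
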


\begin{proof}
	By a partition of unity, we may decompose $\alpha\in\dom\partialbar\cap\dom\partialbar^*$
	into the sum $\alpha=\alpha_1+\alpha_2$, where $\alpha_1$ is supported near $M_\rho$ and
	$\supp\alpha_2\subset\mathcal{U}_\rho$.
	It suffices to approximate $\alpha_1$ and $\alpha_2$ separately by elements of
	$\mathcal{D}^{0,q}(\overline{\mathcal{U}}_\rho;E)$.
	Further partition allows us to assume that $\alpha_1$ is supported in a local boundary chart $U$ of
	$\overline{\mathcal{U}}_\rho$.
	Then a result of H\"ormander~\cite{Hormander-65}*{Proposition 1.2.4}
	(see also Chen--Shaw~\cite{Chen-Shaw-01}*{Lemma 4.3.2}) shows that
	there exists a sequence $\alpha_1^\nu\in\mathcal{D}^{0,q}(\overline{\mathcal{U}}_\rho;E)$ supported in $U$
	such that $\alpha_1^\nu\to\alpha_1$ in the graph norm.
	The second term $\alpha_2$ is approximated by smooth forms supported in $\mathcal{U}_\rho$
	by the standard cut-off technique for complete manifolds
	(see, e.g., the proof of~\cite{Andreotti-Vesentini-65}*{Lemma 4}).
\end{proof}

We will later need the divergence of $\xi$ and the commutator $[\xi,\conj{\xi}]$,
which can be computed as follows.
Recall from Lee--Melrose~\cite{Lee-Melrose-82}*{Section 2}
that there exists a unique $(1,0)$-vector field $X$ on a (two-sided) neighborhood of $\bdry\Omega$ satisfying
\begin{equation*}
	\iota_X\partial\partialbar\varphi=\kappa\partialbar\varphi,\qquad
	\partial \varphi(X)=-1
\end{equation*}
for some real-valued function $\kappa$, which is called the \emph{transverse curvature}.
Then, since
\begin{equation*}
	g=\frac{\partial\varphi\,\conj{\partial}\varphi}{\varphi^2}-\frac{\partial\conj{\partial}\varphi}{\varphi},
\end{equation*}
we get $\abs{X}^2=\varphi^{-2}(1+\kappa\varphi)$ and hence $\xi=(1+\kappa\varphi)^{-1/2}\varphi X$,
which is the metric dual of $(1+\kappa\varphi)^{1/2}\partialbar(-\log\varphi)$.
This implies that
\begin{equation}
	\label{eq:divergence-of-unit-normal}
	\begin{split}
		\div\xi=\tr\nabla'\xi
		&=\tr_g\partial((1+\kappa\varphi)^{1/2}\partialbar(-\log\varphi))\\
		&=\tr_g\partial\partialbar(-\log\varphi)+o(1)
		=n+o(1)
	\quad\text{as $\varphi\to 0$},
	\end{split}
\end{equation}
where $\nabla=\nabla'+\nabla''$ is the type decomposition of the Levi-Civita connection. Moreover,
\begin{equation}
	\label{eq:bracket-of-unit-normal}
	[\xi,\conj{\xi}]
	=[\varphi X,\varphi\conj{X}]+o(1)
	=\varphi X-\varphi\overline{X}+\varphi^2[X,\overline{X}]+o(1)
	=\xi-\overline{\xi}+o(1)
	\quad\text{as $\varphi\to 0$},
\end{equation}
the last equality being because $[X,\overline{X}]$ is continuous up to $\bdry\Omega$ and hence has
$O(\varphi^{-1})$ pointwise norm with respect to $g$.

\subsection{The estimate}

The usual technique for obtaining estimates related to the $\partialbar$-Neumann problem
on strictly pseudoconvex domains is to use the Morrey--Kohn--H\"ormander equality,
which equates $\norm{\partialbar\alpha}^2+\norm{\partialbar^*\alpha}^2$ with
$\norm{\nabla''\alpha}^2$ plus zeroth-order terms and a boundary integral.
However, in our case, $M_\rho$ is strictly \emph{pseudoconcave} as the boundary of $\mathcal{U}_\rho$.
H\"ormander~\cite{Hormander-65} introduced (see also Folland--Kohn~\cite{Folland-Kohn-72}*{Section III.2})
``condition $Z(q)$'' to take such cases into consideration.
An interpretation of his technique is to use an equality that lies between those of Morrey--Kohn--H\"ormander
and Bochner--Kodaira--Nakano, the latter being, in this case, a relation between
$\norm{\partialbar\alpha}^2+\norm{\partialbar^*\alpha}^2$ and $\norm{\nabla'\alpha}^2$.
We shall apply his approach and write the relevant terms in terms of curvature.

We start with a geometric version of the Morrey--Kohn--H\"ormander equality
established by Andreotti--Vesentini~\cite{Andreotti-Vesentini-65}.
Let $\alpha$, $\beta\in\mathcal{D}^{0,q}(\overline{\mathcal{U}}_\rho;E)$.
Using local holomorphic coordinates $(z^1,\dots,z^n)$ and
a local holomorphic frame $(s_1,\dots,s_{\rank E})$ of $E$, we write
\begin{equation*}
	\alpha=\frac{1}{q!}\tensor{\alpha}{_{\conj{j}_1}_{\dotsb}_{\conj{j}_q}^a}
	d\conj{z}^{j_1}\wedge\dots\wedge d\conj{z}^{j_q}\otimes s_a,
\end{equation*}
where the sum is taken over all $(j_1,\dots,j_q)\in\set{1,\dots,n}^q$ (not only over the increasing indices)
and $a\in\set{1,\dots,\rank E}$, and $\tensor{\alpha}{_{\conj{j}_1}_{\dotsb}_{\conj{j}_q}^a}$ is skew-symmetric
in $j_1$, $\dotsc$, $j_q$.
Then we define
\begin{equation*}
	\braket{\alpha,\beta}:=\frac{1}{q!}
	\tensor{\alpha}{_{\conj{j}_1}_{\dotsb}_{\conj{j}_q}^a}
	\tensor{\conj{\beta}}{^{\conj{j}_1}^{\dotsb}^{\conj{j}_q}_a}
	\quad\text{and}\quad
	(\alpha,\beta):=\int_{\mathcal{U}_\rho}\braket{\alpha,\beta}dV_g.
\end{equation*}
(A more explicit notation for the latter may be $(\alpha,\beta)_{L^2(\mathcal{U}_\rho)}$,
but we suppress $L^2(\mathcal{U}_\rho)$ for notational simplicity.)
The $L^2$-norm of $\alpha$ on $\mathcal{U}_\rho$ is defined by $\norm{\alpha}=(\alpha,\alpha)^{1/2}$.
Moreover, we write
 $\abs{\alpha}^2=\braket{\alpha,\alpha}$ and
\begin{equation*}
	\norm{\alpha}_b^2:=\int_{M_\rho}\abs{\alpha}^2 dS_g,
\end{equation*}
where $dS_g$ is the area measure on $M_\rho$ induced by $dV_g$.
The actions of the Ricci tensor of $g$ and the curvature $S=\tensor{S}{_i_{\conj{j}}_a^b}$ of $E$
are defined as follows, where the square bracket notation means that we take the skew-symmetrization
over the indices $\conj{j}_1$, $\conj{j}_2$, ..., $\conj{j}_q$:
\begin{gather*}
	\tensor{(\Ric^\circ\alpha)}{_{\conj{j}_1}_{\dotsb}_{\conj{j}_q}^a}
	:=\sum_{s=1}^q\tensor{\Ric}{^{\conj{k}}_{\conj{j}_s}}
	\tensor{\alpha}{_{\conj{j}_1}_{\dotsb}_{\conj{k}}_{\dotsb}_{\conj{j}_q}^a}
	=q\tensor{\Ric}{^{\conj{k}}_{[\conj{j}_1|}}
	\tensor{\alpha}{_{\conj{k}|}_{\conj{j}_2}_{\dotsb}_{\conj{j}_q]}^a},\\
	\tensor{(\mathring{S}\alpha)}{_{\conj{j}_1}_{\dotsb}_{\conj{j}_q}^a}
	:=\sum_{s=1}^q\tensor{S}{^{\conj{k}}_{\conj{j}_s}_b^a}
	\tensor{\alpha}{_{\conj{j}_1}_{\dotsb}_{\conj{k}}_{\dotsb}_{\conj{j}_q}^b}
	=q\tensor{S}{^{\conj{k}}_{[\conj{j}_1|}_b^a}
	\tensor{\alpha}{_{\conj{k}|}_{\conj{j}_2}_{\dotsb}_{\conj{j}_q]}^b}.
\end{gather*}
Then, using \eqref{eq:Neumann-condition}, we get (see~\cite{Andreotti-Vesentini-65}*{p.~113})
\begin{equation*}
	\norm{\partialbar\alpha}^2+\norm{\partialbar^*\alpha}^2
	=\norm{\nabla''\alpha}^2+(\Ric^\circ\alpha,\alpha)+(\mathring{S}\alpha,\alpha)
	-q\int_{\bdry\mathcal{U}_\rho}\frac{1}{\abs{\partial\log\varphi}}\abs{\alpha}^2.
\end{equation*}
The asymptotic curvature behavior \eqref{eq:asymptotic-curvature-behavior} implies
\begin{equation*}
	\Ric^\circ\alpha=-q(n+1)\alpha+o(1).
\end{equation*}
Moreover, $\abs{\partial\log\varphi}=(1+\kappa\varphi)^{1/2}$ and thus it tends to $1$ uniformly at $\bdry\Omega$.
Therefore,
\begin{equation}
	\label{eq:Morrey-Kohn-Hormander}
	\norm{\partialbar\alpha}^2+\norm{\partialbar^*\alpha}^2
	=\norm{\nabla''\alpha}^2-q(n+1)\norm{\alpha}^2+(\mathring{S}\alpha,\alpha)
	-q\norm{\alpha}_b^2+o(\norm{\alpha}^2+\norm{\alpha}_b^2),
\end{equation}
where the remainder term being $o(\norm{\alpha}^2+\norm{\alpha}_b^2)$ means that,
for any $\varepsilon>0$, if $\rho>0$ is sufficiently small then
the absolute value of this term is bounded by $\varepsilon(\norm{\alpha}^2+\norm{\alpha}_b^2)$.

Andreotti--Vesentini equality \eqref{eq:Morrey-Kohn-Hormander} does not work well for our purpose,
for there is a negative boundary integral $-q\norm{\alpha}_b^2$ and the curvature term also becomes negative.
Both defects can be remedied by the following, which is H\"ormander's technique interpreted geometrically:
We decompose $\norm{\nabla''\alpha}^2$ into the tangential and normal parts,
\begin{equation*}
	\norm{\nabla''\alpha}^2=\norm{\nabla''_b\alpha}^2+\norm{\nabla_{\conj{\xi}}\alpha}^2,
\end{equation*}
and replace $\norm{\nabla''_b\alpha}^2$ with $\norm{\nabla'_b\alpha}^2$ by integration-by-parts.

\begin{lem}
	For $\alpha\in C^\infty_c\spacewedge^{0,q}(\overline{\mathcal{U}}_\rho;E)$,
	\label{lem:Hormander-technique}
	\begin{multline*}
		\norm{\nabla''_b\alpha}^2
		=\norm{\nabla'_b\alpha}^2+n(n+q-1)\norm{\alpha}^2-\norm{\iota_{\conj{\xi}}\alpha}^2
		-((\tr_gS)\alpha,\alpha)+(S(\xi,\conj{\xi})\alpha,\alpha)\\
		+2(n-1)\re(\nabla_{\conj{\xi}}\alpha,\alpha)+(n-1)\norm{\alpha}_b^2
		+o(\norm{\alpha}^2+\norm{\nabla_b\alpha}^2+\norm{\nabla_{\conj{\xi}}\alpha}^2+\norm{\alpha}_b^2).
	\end{multline*}
\end{lem}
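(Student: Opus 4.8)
The plan is to establish this identity by the standard Bochner–Kodaira technique of comparing $\partialbar$ and $\partial$ in the tangential directions along the level sets $M_c = \{\varphi = c\}$, which is precisely the device that turns the Morrey–Kohn–Hörmander equality into one involving $\norm{\nabla'_b\alpha}^2$ rather than $\norm{\nabla''_b\alpha}^2$. First I would work in a local frame adapted to the boundary foliation: take $\set{\xi, e_2, \dots, e_n}$ a local unitary $(1,0)$-frame near $\bdry\Omega$ with $e_2, \dots, e_n$ tangent to the level sets $M_c$, so that $\nabla_b$ refers to covariant derivatives in the directions $e_\mu$, $\conj{e_\mu}$, $\mu \ge 2$, and the $\conj\xi$-direction is the (anti-holomorphic) normal. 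The key algebraic input is the Ricci/Weitzenböck-type identity that for each tangential index $\mu$ one has, after contracting,
\begin{equation*}
	\sum_{\mu\ge 2}\abs{\nabla_{\conj{e_\mu}}\alpha}^2 - \sum_{\mu\ge 2}\abs{\nabla_{e_\mu}\alpha}^2
	= (\text{curvature terms}) + (\text{divergence terms}),
\end{equation*}
where the commutator $[\nabla_{e_\mu}, \nabla_{\conj{e_\mu}}]$ produces the curvature of $g$ and of $E$, and the divergence terms are boundary-plus-interior integrals arising from integration by parts of the cross terms.

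Second I would carry out the integration by parts carefully. Integrating $\sum_{\mu\ge2}(\nabla_{\conj{e_\mu}}\alpha, \nabla_{\conj{e_\mu}}\alpha)$ by parts against $\alpha$ moves one derivative and produces (a) a curvature term from switching $\nabla_{\conj{e_\mu}}\nabla_{e_\mu}$ to $\nabla_{e_\mu}\nabla_{\conj{e_\mu}}$ — this is where $\tr_g S$, $S(\xi,\conj\xi)$, and the Ricci contractions enter, the latter being evaluated asymptotically via \eqref{eq:asymptotic-curvature-behavior} to give the numerical coefficient $n(n+q-1)$ together with the $-\norm{\iota_{\conj\xi}\alpha}^2$ correction coming from the contraction of the curvature action on $(0,q)$-forms; (b) the term $2(n-1)\re(\nabla_{\conj\xi}\alpha,\alpha)$, which comes from the divergence $\div\xi = n + o(1)$ from \eqref{eq:divergence-of-unit-normal} and the commutator $[\xi,\conj\xi] = \xi - \conj\xi + o(1)$ from \eqref{eq:bracket-of-unit-normal} acting on the frame as it fails to be parallel — the factor $n-1$ is the number of tangential directions; and (c) a boundary integral over $M_\rho$ that, after using that $\abs{\partial\log\varphi} \to 1$ and the mean curvature of $M_\rho$ approaches its limiting value, contributes $(n-1)\norm{\alpha}_b^2$. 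All the terms that are lower order in $\rho$ — those coming from the $o(1)$ parts of $\div\xi$, of the brackets, of the second fundamental form, and of the frame's failure to be unitary-parallel — are collected into the error $o(\norm{\alpha}^2 + \norm{\nabla_b\alpha}^2 + \norm{\nabla_{\conj\xi}\alpha}^2 + \norm{\alpha}_b^2)$, using that each such quantity is controlled by a power of $\varphi$ times one of the listed norms.

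Third I would check that no extra boundary term survives beyond $(n-1)\norm{\alpha}_b^2$: since $\alpha \in C^\infty_c\spacewedge^{0,q}(\overline{\mathcal{U}}_\rho; E)$ but we are \emph{not} imposing the Neumann condition \eqref{eq:Neumann-condition} here (this is a pure integration-by-parts identity, which is why it is stated for general $C^\infty_c$ forms rather than for $\mathcal{D}^{0,q}$), one must retain the full boundary contribution; the normal-direction cross terms $(\nabla_{\conj\xi}\alpha, \alpha)$ do not produce an additional boundary integral because $\xi$ is normal and the relevant integration by parts is along $M_\rho$ only in the tangential step. I would also verify the counting of the constant: the $n(n+q-1)$ splits as $n \cdot (n-1)$ from the $n-1$ tangential curvature contractions of the metric (each contributing the asymptotic value $n$ up to sign, after the $o(1)$) plus $n \cdot q$ from the action on the $q$ antiholomorphic form indices, matching \eqref{eq:asymptotic-curvature-behavior}.

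I expect the main obstacle to be bookkeeping rather than conceptual: precisely tracking which curvature contractions of $g$ survive as the explicit $n(n+q-1)\norm{\alpha}^2 - \norm{\iota_{\conj\xi}\alpha}^2$ versus which are absorbed into the $o(1)$ error, and making sure the coefficient $2(n-1)$ of the normal cross term and the coefficient $(n-1)$ of the boundary term come out exactly, since both originate from counting tangential directions but enter through different mechanisms (the divergence of $\xi$ and the mean curvature of $M_\rho$, respectively). A secondary subtlety is justifying that $\norm{\nabla_b\alpha}^2$ in the error term is the right quantity to control the mixed lower-order contributions, which requires that the non-unitarity of the adapted frame contributes derivatives of $\alpha$ only in tangential directions to leading order — this follows because the frame $\set{\xi, e_\mu}$ can be chosen unitary-parallel \emph{along} $M_\rho$ and the transverse variation of the frame is $O(\varphi)$ by the smoothness of the foliation.
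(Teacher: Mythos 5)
Your overall strategy---comparing $\norm{\nabla''_b\alpha}^2$ with $\norm{\nabla'_b\alpha}^2$ via integration by parts and commutation of covariant derivatives, feeding in the asymptotic curvature \eqref{eq:asymptotic-curvature-behavior}, the divergence \eqref{eq:divergence-of-unit-normal} and the bracket \eqref{eq:bracket-of-unit-normal}---is the right one, but the mechanisms you assign to the two delicate terms are misidentified, and as written your plan would not produce the boundary term. If you integrate by parts only in the directions $e_\mu$, $\conj{e_\mu}$ tangent to the level sets, the divergence theorem yields \emph{no} boundary integral over $M_\rho$ (those frame vectors have vanishing normal component there), and a mean-curvature correction is an interior integral of the form $\int_{\mathcal{U}_\rho}H\abs{\alpha}^2$, not a surface integral; so your item (c) cannot be the source of $(n-1)\norm{\alpha}_b^2$. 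What actually happens is that the tangential commutators $\sum_\mu[e_\mu,\conj{e_\mu}]$ carry a component along $\xi-\conj{\xi}$ of total size $(n-1)(\xi-\conj{\xi})$ up to $o(1)$ (this is the Levi form of the level sets, and it is \emph{not} small); converting the resulting term $(n-1)(\nabla_{\xi-\conj{\xi}}\alpha,\alpha)$ into the stated form requires one further, \emph{normal} integration by parts, namely $(\nabla_\xi\alpha,\alpha)=\int_{\mathcal{U}_\rho}\xi\abs{\alpha}^2-\conj{(\nabla_{\conj{\xi}}\alpha,\alpha)}=-n\norm{\alpha}^2-\norm{\alpha}_b^2-\conj{(\nabla_{\conj{\xi}}\alpha,\alpha)}+o(\norm{\alpha}^2)$, and it is this single step that simultaneously produces the boundary term $(n-1)\norm{\alpha}_b^2$, the $\div\xi$ contribution $n(n-1)\norm{\alpha}^2$, and the cross term $2(n-1)\re(\nabla_{\conj{\xi}}\alpha,\alpha)$. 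Your attributions (the cross term from $\div\xi$ and $[\xi,\conj{\xi}]$ acting on the frame; the boundary term from mean curvature) conflate these and would lead to wrong coefficients; likewise $n(n+q-1)=n(n-1)+nq$ is numerically right, but the $n(n-1)$ comes from $\div\xi=n+o(1)$, not from tangential Ricci contractions, and the $-\norm{\iota_{\conj{\xi}}\alpha}^2$ comes from the term $-q\xi_\flat\wedge\iota_\xi\alpha$ in $R(\xi,\conj{\xi})\alpha$.

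The paper avoids the adapted tangential frame altogether, which removes exactly the bookkeeping you flag as the main difficulty. It writes $\norm{\nabla''_b\alpha}^2=\norm{\nabla''\alpha}^2-\norm{\nabla_{\conj{\xi}}\alpha}^2$ and establishes two frame-free identities: a global Bochner identity $\norm{\nabla''\alpha}^2=\norm{\nabla'\alpha}^2-(\Ric^\circ\alpha,\alpha)-((\tr_gS)\alpha,\alpha)+\int_{M_\rho}\braket{\nabla_{\xi-\conj{\xi}}\alpha,\alpha}$, in which only traces of curvature enter, and a purely normal identity comparing $\norm{\nabla_{\conj{\xi}}\alpha}^2$ with $\norm{\nabla_\xi\alpha}^2$, in which only $R(\xi,\conj{\xi})$, $S(\xi,\conj{\xi})$, $\div\xi$, $\div\conj{\xi}$ and $[\xi,\conj{\xi}]$ enter. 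Subtracting the second from the first cancels the common surface integrals $\int_{M_\rho}\braket{\nabla_{\xi-\conj{\xi}}\alpha,\alpha}$, and the surviving terms $(n-1)\norm{\alpha}_b^2$, $n(n-1)\norm{\alpha}^2$ and $2(n-1)\re(\nabla_{\conj{\xi}}\alpha,\alpha)$ all drop out of the normal integration by parts described above. If you want to salvage your direct tangential route, you must (a) identify the normal component of $\sum_\mu[e_\mu,\conj{e_\mu}]$ with $(n-1)(\xi-\conj{\xi})+o(1)$ via the Levi form, and (b) perform the additional normal integration by parts to generate the boundary term; neither step is present in your outline.
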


\begin{proof}
	We first compute the difference between $\norm{\nabla''\alpha}^2$ and $\norm{\nabla'\alpha}^2$.
	By the divergence theorem,
	\begin{equation*}
		\norm{\nabla''\alpha}^2
		=-(\tr_g\nabla'\nabla''\alpha,\alpha)
		-\int_{M_\rho}\braket{\nabla_{\conj{\xi}}\alpha,\alpha}
	\end{equation*}
	and
	\begin{equation*}
		\norm{\nabla'\alpha}^2
		=-(\tr_g\nabla''\nabla'\alpha,\alpha)
		-\int_{M_\rho}\braket{\nabla_\xi\alpha,\alpha}.
	\end{equation*}
	The traces can be related to each other by
	\begin{equation*}
		\tr_g\nabla'\nabla''\alpha=\tr_g\nabla''\nabla'\alpha+\Ric^\circ\alpha+(\tr_gS)\alpha.
	\end{equation*}
	Hence
	\begin{equation}
		\label{eq:total-antiholomorphic-tangential-derivative}
		\begin{split}
			\norm{\nabla''\alpha}^2-\norm{\nabla'\alpha}^2
			&=-(\Ric^\circ\alpha,\alpha)-((\tr_gS)\alpha,\alpha)
			+\int_{M_\rho}\braket{\nabla_{\xi-\conj{\xi}}\alpha,\alpha}\\
			&=q(n+1)\norm{\alpha}^2-((\tr_gS)\alpha,\alpha)
			+\int_{M_\rho}\braket{\nabla_{\xi-\conj{\xi}}\alpha,\alpha}
			+o(\norm{\alpha}^2).
		\end{split}
	\end{equation}
	Next we compute the difference between $\norm{\nabla_{\conj{\xi}}\alpha}^2$ and $\norm{\nabla_\xi\alpha}^2$.
	Again by the divergence theorem,
	\begin{equation*}
		\norm{\nabla_{\conj{\xi}}\alpha}^2
		=-(\nabla_\xi\nabla_{\conj{\xi}}\alpha,\alpha)-((\div\xi)\nabla_{\conj{\xi}}\alpha,\alpha)
		-\int_{M_\rho}\braket{\nabla_{\conj{\xi}}\alpha,\alpha}
	\end{equation*}
	and
	\begin{equation*}
		\norm{\nabla_\xi\alpha}^2
		=-(\nabla_{\conj{\xi}}\nabla_\xi\alpha,\alpha)-((\div\conj{\xi})\nabla_\xi\alpha,\alpha)
		-\int_{M_\rho}\braket{\nabla_\xi\alpha,\alpha},
	\end{equation*}
	and hence
	\begin{multline}
		\label{eq:antiholomorphic-normal-derivative-pre}
		\norm{\nabla_{\conj{\xi}}\alpha}^2-\norm{\nabla_\xi\alpha}^2
		=-(R(\xi,\conj{\xi})\alpha+S(\xi,\conj{\xi})\alpha
		+\nabla_{[\xi,\conj{\xi}]}\alpha,\alpha)\\
		-((\div \xi)\nabla_{\conj{\xi}}\alpha,\alpha)
		+((\div \conj{\xi})\nabla_\xi\alpha,\alpha)
		+\int_{M_\rho}\braket{\nabla_{\xi-\conj{\xi}}\alpha,\alpha}.
	\end{multline}
	Now since $R(\xi,\conj{\xi})\alpha=-q\alpha-q\xi_\flat\wedge\iota_\xi\alpha+o(\abs{\alpha})$
	by \eqref{eq:asymptotic-curvature-behavior}, where $\xi_\flat$ is the metric dual of $\xi$, we get
	\begin{equation*}
		(R(\xi,\conj{\xi})\alpha,\alpha)=-q\norm{\alpha}^2-\norm{\iota_{\conj{\xi}}\alpha}^2+o(\norm{\alpha}^2).
	\end{equation*}
	On the other hand, by \eqref{eq:divergence-of-unit-normal} and \eqref{eq:bracket-of-unit-normal},
	\begin{equation*}
		\begin{split}
			&((\div\conj{\xi})\nabla_\xi\alpha,\alpha)-((\div\xi)\nabla_{\conj{\xi}}\alpha,\alpha)
			-(\nabla_{[\xi,\conj{\xi}]}\alpha,\alpha)\\
			&=(n-1)(\nabla_{\xi-\conj{\xi}}\alpha,\alpha)+(\nabla_{f\xi-\conj{f\xi}}\alpha,\alpha)
			+o(\norm{\alpha}^2+\norm{\nabla_b\alpha}^2),
		\end{split}
	\end{equation*}
	where $f$ is a smooth function defined near $\bdry\Omega$ that vanishes along $\bdry\Omega$.
	Moreover,
	\begin{equation*}
		\begin{split}
			(\nabla_\xi\alpha,\alpha)
			&=\int_{\mathcal{U}_\rho}\xi\abs{\alpha}^2-\conj{(\nabla_{\conj{\xi}}\alpha,\alpha)}
			=-\int_{\mathcal{U}_\rho}(\div\xi)\abs{\alpha}^2-\norm{\alpha}_b^2
			-\conj{(\nabla_{\conj{\xi}}\alpha,\alpha)}\\
			&=-n\norm{\alpha}^2-\norm{\alpha}_b^2-\conj{(\nabla_{\conj{\xi}}\alpha,\alpha)}
			+o(\norm{\alpha}^2)
		\end{split}
	\end{equation*}
	and similarly one gets
	$(\nabla_{f\xi}\alpha,\alpha)=o(\norm{\alpha}^2+\norm{\nabla_{\conj{\xi}}\alpha}^2+\norm{\alpha}_b^2)$.
	Therefore,
	\begin{multline*}
		\norm{\nabla_{\conj{\xi}}\alpha}^2
		-\norm{\nabla_\xi\alpha}^2
		=-(n^2-n-q)\norm{\alpha}^2+\norm{\iota_{\conj{\xi}}\alpha}^2
		-(S(\xi,\conj{\xi})\alpha,\alpha)
		-2(n-1)\re(\nabla_{\conj{\xi}}\alpha,\alpha)\\
		-(n-1)\norm{\alpha}_b^2
		+\int_{M_\rho}\braket{\nabla_{\xi-\conj{\xi}}\alpha,\alpha}
		+o(\norm{\alpha}^2+\norm{\nabla_b\alpha}^2+\norm{\nabla_{\conj{\xi}}\alpha}^2+\norm{\alpha}_b^2).
	\end{multline*}
	Combining this with \eqref{eq:total-antiholomorphic-tangential-derivative}, we obtain the lemma.
\end{proof}

Equation \eqref{eq:Morrey-Kohn-Hormander} and Lemma \ref{lem:Hormander-technique} imply
the following approximate equality for $\alpha\in\mathcal{D}^{0,q}(\overline{\mathcal{U}}_\rho;E)$:
\begin{multline}
	\label{eq:Modified-Andreotti-Vesentini}
	\norm{\partialbar\alpha}^2+\norm{\partialbar^*\alpha}^2
	=\norm{\nabla'_b\alpha}^2+\norm{\nabla_{\conj{\xi}}\alpha}^2
	+(n^2-n-q)\norm{\alpha}^2-\norm{\iota_{\conj{\xi}}\alpha}^2\\
	+(\mathring{S}\alpha-(\tr_g S)\alpha+S(\xi,\conj{\xi})\alpha,\alpha)
	-2(n-1)\re(\nabla_{\conj{\xi}}\alpha,\alpha)+(n-q-1)\norm{\alpha}_b^2\\
	+o(\norm{\alpha}^2+\norm{\nabla'_b\alpha}^2+\norm{\nabla_{\conj{\xi}}\alpha}^2+\norm{\alpha}_b^2).
\end{multline}
As a consequence, we obtain the following estimate.

\begin{prop}
	For $\alpha\in\mathcal{D}^{0,q}(\overline{\mathcal{U}}_\rho;E)$
	\begin{multline}
		\label{eq:final-general-estimate}
		\norm{\partialbar\alpha}^2+\norm{\partialbar^*\alpha}^2
		\ge(\mathring{S}\alpha-(\tr_gS)\alpha+S(\xi,\conj{\xi})\alpha,\alpha)
		+(n-q-2)\norm{\alpha}^2+(n-q-1)\norm{\alpha}_b^2\\
		+o(\norm{\alpha}^2+\norm{\alpha}_b^2)
	\end{multline}
	in the sense that, for any given $\varepsilon>0$, if $\rho>0$ is sufficiently small
	then inequality \eqref{eq:final-general-estimate}
	with $o(\norm{\alpha}^2+\norm{\alpha}_b^2)$ replaced by $-\varepsilon(\norm{\alpha}^2+\norm{\alpha}_b^2)$
	holds for any $\alpha\in\mathcal{D}^{0,q}(\overline{\mathcal{U}}_\rho;E)$.
\end{prop}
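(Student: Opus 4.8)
The plan is to deduce \eqref{eq:final-general-estimate} directly from the modified Andreotti--Vesentini equality \eqref{eq:Modified-Andreotti-Vesentini} by discarding the manifestly nonnegative terms and completing the square in what is left. First I would fix an auxiliary parameter $\varepsilon_0\in(0,1)$ and choose $\rho>0$ small enough that the $o$-remainder in \eqref{eq:Modified-Andreotti-Vesentini} is bounded in absolute value by $\varepsilon_0(\norm{\alpha}^2+\norm{\nabla'_b\alpha}^2+\norm{\nabla_{\conj{\xi}}\alpha}^2+\norm{\alpha}_b^2)$. Then \eqref{eq:Modified-Andreotti-Vesentini} turns into
\begin{multline*}
	\norm{\partialbar\alpha}^2+\norm{\partialbar^*\alpha}^2
	\ge(1-\varepsilon_0)\norm{\nabla'_b\alpha}^2+(1-\varepsilon_0)\norm{\nabla_{\conj{\xi}}\alpha}^2
	-2(n-1)\re(\nabla_{\conj{\xi}}\alpha,\alpha)+(n^2-n-q-\varepsilon_0)\norm{\alpha}^2\\
	-\norm{\iota_{\conj{\xi}}\alpha}^2
	+(\mathring{S}\alpha-(\tr_g S)\alpha+S(\xi,\conj{\xi})\alpha,\alpha)
	+(n-q-1-\varepsilon_0)\norm{\alpha}_b^2.
\end{multline*}

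Next, assuming $\varepsilon_0<1$, I would simply throw away the nonnegative term $(1-\varepsilon_0)\norm{\nabla'_b\alpha}^2$, estimate the normal-derivative contribution by completing the square,
\begin{equation*}
	(1-\varepsilon_0)\norm{\nabla_{\conj{\xi}}\alpha}^2-2(n-1)\re(\nabla_{\conj{\xi}}\alpha,\alpha)
	=(1-\varepsilon_0)\Norm{\nabla_{\conj{\xi}}\alpha-\tfrac{n-1}{1-\varepsilon_0}\alpha}^2
	-\tfrac{(n-1)^2}{1-\varepsilon_0}\norm{\alpha}^2
	\ge-\tfrac{(n-1)^2}{1-\varepsilon_0}\norm{\alpha}^2,
\end{equation*}
and bound the remaining negative zeroth-order term by $-\norm{\iota_{\conj{\xi}}\alpha}^2\ge-\norm{\alpha}^2$, which is legitimate because $\xi$ has unit length, so $\abs{\iota_{\conj{\xi}}\alpha}\le\abs{\alpha}$ pointwise. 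Using $n^2-n-q-1-(n-1)^2=n-q-2$ and collecting terms, this gives
\begin{equation*}
	\norm{\partialbar\alpha}^2+\norm{\partialbar^*\alpha}^2
	\ge(\mathring{S}\alpha-(\tr_g S)\alpha+S(\xi,\conj{\xi})\alpha,\alpha)
	+\Bigl(n-q-2-\varepsilon_0-\tfrac{\varepsilon_0(n-1)^2}{1-\varepsilon_0}\Bigr)\norm{\alpha}^2
	+(n-q-1-\varepsilon_0)\norm{\alpha}_b^2.
\end{equation*}

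Finally, given the $\varepsilon>0$ in the statement of the proposition, I would choose $\varepsilon_0\in(0,1)$ small enough that $\varepsilon_0+\tfrac{\varepsilon_0(n-1)^2}{1-\varepsilon_0}<\varepsilon$ and $\varepsilon_0<\varepsilon$, and then take $\rho>0$ correspondingly small; this yields \eqref{eq:final-general-estimate} with the $o$-term replaced by $-\varepsilon(\norm{\alpha}^2+\norm{\alpha}_b^2)$. I do not expect a genuine obstacle here: the substance has already been packaged into Lemma~\ref{lem:Hormander-technique} and the resulting identity \eqref{eq:Modified-Andreotti-Vesentini}, and the present step is essentially bookkeeping. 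The one subtlety worth flagging is that the $o$-remainder in \eqref{eq:Modified-Andreotti-Vesentini} involves $\norm{\nabla'_b\alpha}^2$ and $\norm{\nabla_{\conj{\xi}}\alpha}^2$, so those nonnegative gradient terms cannot simply be dropped before the remainder is absorbed; retaining the factor $(1-\varepsilon_0)$ in front of them and only then completing the square is precisely what circumvents this.
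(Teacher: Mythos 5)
Your proposal is correct and follows essentially the same route as the paper: the paper likewise starts from \eqref{eq:Modified-Andreotti-Vesentini}, applies the Cauchy--Schwarz bound $(1-\varepsilon')\norm{\nabla_{\conj{\xi}}\alpha}^2-2(n-1)\re(\nabla_{\conj{\xi}}\alpha,\alpha)\ge-(1-\varepsilon')^{-1}(n-1)^2\norm{\alpha}^2$ (your completed square is the same inequality), and uses $\norm{\iota_{\conj{\xi}}\alpha}^2\le\norm{\alpha}^2$. Your explicit handling of the gradient terms appearing in the $o$-remainder is a point the paper leaves implicit, but it is the same argument.
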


\begin{proof}
	By the Cauchy--Schwarz inequality, we have
	\begin{equation}
		\label{eq:Cauchy-Schwarz}
		(1-\varepsilon')\norm{\nabla_{\conj{\xi}}\alpha}^2-2(n-1)\re(\nabla_{\conj{\xi}}\alpha,\alpha)
		\ge -(1-\varepsilon')^{-1}(n-1)^2\norm{\alpha}^2.
	\end{equation}
	The proposition follows from \eqref{eq:Modified-Andreotti-Vesentini}, \eqref{eq:Cauchy-Schwarz}, and
	$\norm{\iota_{\conj{\xi}}\alpha}^2\le\norm{\alpha}^2$.
\end{proof}

We apply this proposition to $E=E_\delta=(T^{1,0},\varphi^{-\delta}g)$. Then, since
\begin{equation*}
	S=R-\delta\partial\conj{\partial}(-\log\varphi)\otimes I=R-\delta g\otimes I,
\end{equation*}
we get
\begin{align*}
	(\mathring{S}\alpha,\alpha)&\ge -(2q+q\delta)\norm{\alpha}^2+o(\norm{\alpha}^2),\\
	((\tr_gS)\alpha,\alpha)&=-(n+1+n\delta)\norm{\alpha}^2+o(\norm{\alpha}^2),\\
	(S(\xi,\conj{\xi})\alpha,\alpha)&\ge -(2+\delta)\norm{\alpha}^2+o(\norm{\alpha}^2).
\end{align*}
Therefore, for any $\varepsilon>0$, if $\rho>0$ is small enough then
\begin{equation}
	\label{eq:weighted-estimate}
	\norm{\partialbar\alpha}^2+\norm{\partialbar^*\alpha}^2
	\ge(2n-3q-3-\varepsilon)\norm{\alpha}^2+(n-q-1)\delta\norm{\alpha}^2
	+(n-q-1-\varepsilon)\norm{\alpha}_b^2
\end{equation}
for any $\alpha\in\mathcal{D}^{0,q}(\overline{\mathcal{U}}_\rho;E)$.
Thus Proposition \ref{prop:boundary-weighted-estimate} follows by Lemma \ref{lem:density},
and the proof of our main theorem is completed.
It is also obvious from \eqref{eq:weighted-estimate} that, if $n\ge 4$, then we actually do not need the weight
$\varphi^{-\delta}$.

\appendix
\section{On the vanishing result of Donnelly and Fefferman}
\label{sec:Donnelly-Fefferman}

Recall the following theorem on the space $\mathcal{H}_{(2)}^{p,q}(\Omega)$ of $L^2$ harmonic $(p,q)$-forms
due to Donnelly and Fefferman (which is restated in a way that is convenient for us).

\begin{thm}[Donnelly--Fefferman~\cite{Donnelly-Fefferman-83}, Donnelly~\cite{Donnelly-94}]
	\label{thm:Donnelly-Fefferman}
	Let $\Omega$ be a smoothly bounded strictly pseudoconvex domain of a Stein manifold $Y$
	equipped with the Cheng--Yau metric. Then,
	\begin{equation}
		\label{eq:Donnelly-Fefferman}
		\dim \mathcal{H}_{(2)}^{p,q}(\Omega)=
		\begin{cases}
			0, & p+q\not=n,\\
			\infty, & p+q=n.
		\end{cases}
	\end{equation}
\end{thm}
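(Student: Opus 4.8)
The plan is to run the machinery of Sections~\ref{sec:ACH-metrics}--\ref{sec:proof-main-theorem} with the gauged Einstein operator $\mathcal E_g'$ replaced by the $\partialbar$-Laplacian $\Delta_\partialbar$ acting on $(0,q)$-forms with values in $E:=\Lambda^{p,0}T^*\Omega$ (a scalar $(p,q)$-form being nothing but such a form, for the induced fiber metric on $E$), so that $L^2\mathcal H^{p,q}(\Omega)=\ker_{L^2}\Delta_\partialbar$. The infinite-dimensionality for $p+q=n$ is the content of \cite{Donnelly-Fefferman-83} and \cite{Donnelly-94}, which I would simply quote; thus the task is to show $L^2\mathcal H^{p,q}(\Omega)=0$ for $p+q\ne n$. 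Since the conjugate-linear Hodge star gives $L^2\mathcal H^{p,q}(\Omega)\cong L^2\mathcal H^{n-p,n-q}(\Omega)$, I may assume $p+q<n$, so in particular $q\le n-1$.

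Step one is the a priori decay. The operator $\Delta_\partialbar$ on $E$-valued $(0,q)$-forms is geometric in the sense of Subsection~\ref{subsec:Einstein-deformations} (a universal expression in $\nabla$ and the curvature of $g$, with $U(n)$ acting), it is formally self-adjoint, and it satisfies the coercivity estimate~\eqref{eq:coercivity} on $\mathbb{C}H^n$ because the bottom of the $L^2$-spectrum of $\Delta_\partialbar$ on $(p,q)$-forms over $\mathbb{C}H^n$ is strictly positive whenever $p+q\ne n$. Its indicial roots are computed on $\mathbb{C}H^n=\mathcal H^{2n-1}\times(0,\infty)$ exactly as for $\mathcal E_g'$ in Subsection~\ref{subsec:Einstein-deformations}, using the frame of \cite{Matsumoto-preprint14}*{Subsection 5.1}; one checks that the indicial radius $R=R_{\Delta_\partialbar}$ is positive when $p+q\ne n$. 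Proposition~\ref{prop:apriori}, applied to the Cheng--Yau metric, then yields $L^2\mathcal H^{p,q}(\Omega)\subset L^2_\delta\spacewedge^{0,q}(\Omega;E)$ for every $0<\delta<R$.

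Step two repeats Subsection~\ref{subsec:reduction-to-L2-cohomology} with $T^{1,0}$ replaced by $E$: absorbing $\varphi^{-\delta}$ into the fiber metric of $E$, the a priori decay together with the Hodge--Kodaira decomposition, Ohsawa's exact sequence, and the vanishing $H^{0,q}_c(\Omega;E)=0$ (valid since $\Omega$ is Stein and $q\le n-1$) reduces the claim to $L^2_\delta H^{0,q}(\mathcal U_\rho;E)=0$ for $\rho$ small. For $q=0$ this is the absence of nonzero $L^2_\delta$-holomorphic sections of $E$ near $\bdry\Omega$, which follows from Hartogs-type extension across the pseudoconcave hypersurface $M_\rho$ combined with the vanishing at $\bdry\Omega$ forced by the weight. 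For $q\ge 1$ I would argue by a Bochner-type method based on the Bochner--Kodaira--Nakano formula, exactly as in Section~\ref{sec:proof-main-theorem}: since $M_\rho$ is strictly pseudoconcave as the boundary of $\mathcal U_\rho$, its boundary integral can be arranged to carry the favorable sign for $1\le q\le n-1$; writing the curvature of $(E,\varphi^{-\delta}h)$ as $R|_{\Lambda^{p,0}}-\delta\,g\otimes I$ and using the asymptotic curvature behavior~\eqref{eq:asymptotic-curvature-behavior}, the curvature term together with the strictly positive weight contribution $\delta(n-q)\norm\alpha^2$ then dominates $\norm\alpha^2$ when $p+q\ne n$ and $\rho$ is small. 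This is the coercivity estimate behind Proposition~\ref{prop:boundary-vanishing} in the present setting.

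The step I expect to be the main obstacle is the curvature bookkeeping in step two, just as in Section~\ref{sec:proof-main-theorem}: one has to verify that on the complex hyperbolic model the relevant curvature operator on $(0,q)$-forms is nonnegative for $p+q<n$ (so that any positive weight makes the estimate strict), that the errors from~\eqref{eq:asymptotic-curvature-behavior} are genuinely $o(\norm\alpha^2+\norm\alpha_b^2)$, and that the boundary integral behaves as claimed across the whole range $1\le q\le n-1$. The borderline degree $q=n-1$ in small dimension is where this is tightest, and I would expect the argument to close comfortably for $n\ge 3$, while the dimension-two case may lie outside its reach, to be covered by the original proof of Donnelly and Fefferman.
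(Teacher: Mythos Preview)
Your Step~1 is exactly what the paper does: coercivity of $\Delta_{\conj\partial}$ on $\mathbb{C}H^n$ for $p+q\ne n$ (via the Donnelly--Xavier formula), positivity of the indicial radius $R_{\Delta_{\conj\partial}}=n-p-q$, and Proposition~\ref{prop:apriori} together yield the \emph{a priori} decay $L^2\mathcal{H}^{p,q}(\Omega)\subset L^2_\delta$.

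Your Step~2, however, takes an unnecessary detour and does not close. The paper does \emph{not} localize to $\mathcal{U}_\rho$, invoke Ohsawa's exact sequence, or use the Section~\ref{sec:proof-main-theorem} boundary machinery here. The key observation is that the curvature of the trivial line bundle $L_\delta=(\mathbb{C},\varphi^{-\delta})$ equals $-\delta g$ \emph{exactly, on all of $\Omega$}, because $-\log\varphi$ is a global K\"ahler potential for the metric used in the appendix. Hence the ordinary Bochner--Kodaira--Nakano equality on the complete manifold $\Omega$ (no boundary!) reads, for $\alpha\in C_c^\infty\spacewedge^{p,q}(\Omega;L_\delta)$,
\begin{equation*}
\norm{\partialbar\alpha}^2+\norm{\partialbar^*\alpha}^2
=\norm{D'\alpha}^2+\norm{(D')^*\alpha}^2+(n-p-q)\delta\norm{\alpha}^2,
\end{equation*}
and density gives $L^2_\delta H^{p,q}(\Omega)=0$ at once for every $p+q<n$, every $\delta>0$, and every $n\ge 2$. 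There is no curvature bookkeeping beyond this one line, no separate $q=0$ case, and no difficulty at $n=2$ or at $q=n-1$.

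By contrast, your localization creates genuine problems. The $q=0$ argument does not conclude: Hartogs extension across $M_\rho$ sends an $L^2_\delta$ holomorphic $p$-form on $\mathcal{U}_\rho$ to one on all of $\Omega$, but ``vanishing at $\bdry\Omega$ forced by the weight'' is not a proof that the extension is zero---you are back to the global statement you were trying to avoid. And if you feed the weighted trivial line bundle (the relevant $E$ when $p=0$, $q=n-1$) into estimate~\eqref{eq:final-general-estimate}, the coefficients come out as $(n-q-1)\delta=0$ and $n-q-2=-1$, so the Section~\ref{sec:proof-main-theorem} route genuinely fails at that degree. Your worries about the borderline $q=n-1$ and about $n=2$ are symptoms of having taken the harder road; the paper's global argument bypasses them entirely.
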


Actually, Donnelly--Fefferman~\cite{Donnelly-Fefferman-83} considered the case in which $Y=\mathbb{C}^n$,
and the metric was the Bergman metric (which is in fact quasi-equivalent to the Cheng--Yau metric).
For this case, Berndtsson~\cite{Berndtsson-96} has given another proof for $(p,q)=(n,1)$
in connection with the extension theorem of Ohsawa--Takegoshi.
The result of Donnelly~\cite{Donnelly-94} is more far-reaching: It applies to any complex manifold
$\Omega$ equipped with a complete K\"ahler metric $g$ whose associated 2-form $\omega$ admits the expression
$\omega=d\eta$ with a 1-form $\eta$ bounded with respect to $g$.
It is based on an observation of Gromov~\cite{Gromov-91}.

We shall see in this appendix that our technique provides another
proof of the vanishing part of Theorem \ref{thm:Donnelly-Fefferman}.
As we did in Section~\ref{sec:proof-main-theorem}, instead of the Cheng--Yau metric,
we can consider a metric $g$ of the form
\begin{equation*}
	\tensor{g}{_i_{\conj{j}}}=\partial_i\partial_{\conj{j}}(-\log\varphi),
\end{equation*}
where $\varphi$ is some smooth defining function of $\Omega$.
We apply the theory of geometric elliptic differential operators outlined in
Section~\ref{subsec:Einstein-deformations} to the Dolbeault Laplacian $\Delta_{\conj{\partial}}$.

By the Poincar\'e duality, it suffices to show $\mathcal{H}_{(2)}^{p,q}(\Omega)=0$ for $p+q<n$.
For these cases, on the complex hyperbolic space $\mathbb{C}H^n$, the coerciveness estimate
\begin{equation*}
	\norm{\alpha}^2\le C\norm{\Delta_\partialbar\alpha}^2,
	\qquad\alpha\in\dom\Delta_\partialbar\subset L^2\spacewedge^{p,q}(\Omega)
\end{equation*}
follows by the argument in~\cite{Donnelly-Fefferman-83}*{Section 3} based on
a formula of Donnelly--Xavier~\cite{Donnelly-Xavier-84}.
This makes Proposition \ref{prop:apriori} applicable. Our claim is the following.

\begin{lem}
	If $p+q<n$, then
	the indicial radius $R_{\Delta_{\conj{\partial}}}$
	of the Dolbeault Laplacian $\Delta_{\conj{\partial}}$ acting on
	$(p,q)$-forms is positive. Therefore, by Proposition \ref{prop:apriori},
	the space $\mathcal{H}_{(2)}^{p,q}(\Omega)$ is contained in $\mathcal{H}_{(2), \delta}^{p,q}(\Omega)$
	for some $\delta>0$.
\end{lem}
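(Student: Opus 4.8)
The statement has two parts: (i) compute the indicial roots of $\Delta_\partialbar$ acting on $(p,q)$-forms on $\mathbb{C}H^n$ and show that none of them sits on the line $\re s=n$ when $p+q<n$, so that $R_{\Delta_\partialbar}>0$; and (ii) conclude by invoking Proposition~\ref{prop:apriori}. Part (ii) is immediate once the coercivity estimate (already granted, via Donnelly--Fefferman and Donnelly--Xavier) and the positivity of the indicial radius are in hand: applying Proposition~\ref{prop:apriori} to $P=\Delta_\partialbar$ with any $0<\delta<R_{\Delta_\partialbar}$ gives that the kernel of $\Delta_\partialbar$ on $H^m_\delta$ equals $\ker_{L^2}\Delta_\partialbar=L^2\mathcal{H}^{p,q}(\Omega)$, which is exactly the inclusion $L^2\mathcal{H}^{p,q}(\Omega)\subset L^2_\delta\mathcal{H}^{p,q}(\Omega)$ claimed. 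So the real content is part (i).

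\textbf{Computing the indicial roots.} Following the same strategy used for $\mathcal{E}_g'$ in Subsection~\ref{subsec:Einstein-deformations}, I would realize $\mathbb{C}H^n$ as $\mathcal{H}^{2n-1}\times(0,\infty)$ with the model metric \eqref{eq:ACH-model} and the standard pseudohermitian structure, and work in the frame $\bm{Z}_\tau=\tfrac12 x\partial_x+ix^2T$, $\bm{Z}_\alpha=xZ_\alpha$, whose Christoffel symbols are those recorded in \cite{Matsumoto-preprint14}*{Equations (5.2)}. A $(p,q)$-form decomposes into components according to how many of the frame covectors are ``$\tau$-type'' (normal) versus ``$\alpha$-type'' (tangential to the Heisenberg factor); by $H=U(n-1)$-equivariance and Schur's lemma, $I_{\Delta_\partialbar}(s)$ preserves each isotypic block, so it suffices to identify the scalar second-order operator $-\tfrac14 x\partial_x(x\partial_x-c)$ governing each block and read off its indicial roots $0$ and $c$. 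The point is that on each such block $c$ equals a nonnegative integer built from $n$, $p$, $q$ and the number of normal covectors present; the purely tangential block $S^p_\mathbb{C}(\mathbb{C}^{n-1})^*\otimes\wedge^q(\mathbb{C}^{n-1})^*$ is the one to watch, and there the relevant constant should come out as something like $c=n+(\text{nonneg.})$, with the extreme root on that line being $s=n+q-p$ or a similar shift dictated by the ``weight'' normalization $I_{P^*}(s)=I_P(2n-\bar s)^*$. All of these can be extracted mechanically from \cite{Matsumoto-preprint14}*{Equations (5.9)} (or recomputed from (5.2)); I would not grind through it here but simply tabulate the resulting $c$'s.

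\textbf{The arithmetic that makes it work.} Having the list of roots $\{0,\,c_1,\,c_2,\dots\}$ with each $c_j$ a nonnegative integer of the form $n\pm(\text{something involving }p,q)$, the claim $R_{\Delta_\partialbar}>0$ is the assertion that none of the $c_j$ equals $n$. Equivalently, the ``something'' is never zero, and this is precisely where the hypothesis $p+q<n$ enters: the borderline case $c_j=n$ would correspond to $p+q=n$ (reflecting the fact that $L^2\mathcal{H}^{p,q}$ is infinite-dimensional exactly on the middle degree $p+q=n$, per Theorem~\ref{thm:Donnelly-Fefferman}), and the strict inequality rules it out. So after the bookkeeping, the proof closes by checking the finitely many blocks and observing $|c_j-n|\ge 1$ in each, whence $R_{\Delta_\partialbar}\ge 1>0$.

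\textbf{Main obstacle.} The technical heart is the indicial-root computation: correctly enumerating the $U(n-1)$-isotypic components of $\wedge^{p,q}T^*\mathbb{C}H^n$ restricted to a geodesic sphere, and for each one pinning down the scalar constant $c$ with the right normalization of the ``weight shift'' in $I_{P^*}(s)=I_P(2n-\bar s)^*$. This is not conceptually hard — it is the exact analogue of what \cite{Biquard-00} and \cite{Matsumoto-preprint14} do for $\mathcal{E}_g'$ — but it is where an off-by-one error would silently wreck the hypothesis threshold, so it deserves care; in the write-up I would lean on the explicit frame and formulas of \cite{Matsumoto-preprint14}*{Subsection 5.1} to keep it honest.
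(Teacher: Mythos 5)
Your overall strategy is the paper's: realize $\mathbb{C}H^n$ as $\mathcal{H}^{2n-1}\times(0,\infty)$, decompose $\bigwedge^{p,q}$ into $H=U(n-1)$-isotypic blocks, read off the indicial polynomial blockwise via Schur's lemma, and then feed the positive indicial radius together with the Donnelly--Xavier coercivity into Proposition~\ref{prop:apriori}. Part (ii) of your argument is complete and matches the paper. The paper, too, declines to write out the root computation (``the verification is left to the interested reader''), so at the level of detail you are aiming for the plans coincide.

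However, the one piece of hard content in the lemma is precisely the location of the indicial roots nearest the line $\re s=n$, and that is where your sketch goes astray. First, the structural claim that each block contributes roots $\{0,c\}$ is an artifact of the particular block of $\mathcal{E}_g'$ computed in Subsection~\ref{subsec:Einstein-deformations}; for a formally self-adjoint $P$ the roots on each block come in pairs $\{s_0,\,2n-\conj{s_0}\}$ symmetric about $\re s=n$, and $s_0$ need not be $0$. Second, your tentative extremal root $s=n+q-p$ is not the right answer: the paper asserts that the roots closest to $\re s=n$ occur on the totally trace-free part of $\bigwedge^p(\mathbb{C}^{n-1})^*\otimes\bigwedge^q(\conj{\mathbb{C}^{n-1}})^*$ and equal $p+q$ and $2n-p-q$, whence $R_{\Delta_{\conj{\partial}}}=n-p-q$, which is $\ge 1$ exactly when $p+q<n$. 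Your heuristic that the borderline $p+q=n$ should correspond to a root landing on $\re s=n$ is sound and consistent with Theorem~\ref{thm:Donnelly-Fefferman}, but since the entire lemma reduces to this one numerical fact, leaving it as ``tabulate the $c_j$'s'' with an incorrect placeholder formula is the gap: an off-by-one here (which your $n+q-p$ already exhibits) would change the threshold in the hypothesis. To close the argument you need to actually carry out the blockwise computation from \cite{Matsumoto-preprint14}*{Equations (5.2)} and exhibit $p+q$ and $2n-p-q$ as the extremal roots, or cite a source that does.
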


This lemma reduces the vanishing of $\mathcal{H}_{(2)}^{p,q}(\Omega)$ to that of the weighted cohomology
\begin{equation}
	\label{eq:weighted-Dolbeault-cohomology}
	H_{(2), \delta}^{p,q}(\Omega)=0
\end{equation}
by the same argument as in the last paragraph of Section \ref{subsec:reduction-to-L2-cohomology}.
Then, as follows, \eqref{eq:weighted-Dolbeault-cohomology} can be shown by
the Bochner--Kodaira--Nakano equality in the usual way.
Let $L_\delta$ be the trivial line bundle equipped with the fiber metric $\varphi^{-\delta}$.
Then the curvature $S$ of $L_\delta$ is given by
\begin{equation*}
	\tensor{S}{_i_{\conj{j}}}=-\delta\partial_i\partial_{\conj{j}}(-\log\varphi)
	=-\delta\tensor{g}{_i_{\conj{j}}}.
\end{equation*}
Therefore, for compactly supported smooth $(p,q)$-form $\alpha\in C^\infty_c\spacewedge^{p,q}(\Omega;L_\delta)$
with values in $L_\delta$, one has
\begin{equation*}
	\norm{\partialbar\alpha}^2+\norm{\partialbar^*\alpha}^2
	=\norm{D'\alpha}^2+\norm{(D')^*\alpha}^2+(n-p-q)\delta\norm{\alpha}^2,
\end{equation*}
where $D'$ is the holomorphic part of the covariant exterior derivative.
Since $C^\infty_c\spacewedge^{p,q}(\Omega;L_\delta)$ is dense in $\dom\partialbar\cap\dom\partialbar^*$,
this implies \eqref{eq:weighted-Dolbeault-cohomology}.

The computation of $R_{\Delta_{\conj{\partial}}}$ is tedious but straightforward.
The bundle of $(p,q)$-forms is associated to the representation
$\bigwedge^p\mathfrak{m}_0^*\otimes\bigwedge^q\conj{\mathfrak{m}}_0^*$ in the notation of
Section~\ref{subsec:Einstein-deformations} (where $\otimes$ denotes the tensor product over $\mathbb{C}$).
On the subspace $\bigwedge^p(\mathbb{C}^{n-1})^*\otimes_0\bigwedge^q(\conj{\mathbb{C}^{n-1}})^*$,
where $\otimes_0$ means that we take the totally trace-free part, the indicial roots $p+q$ and $2n-p-q$ appear.
These are the closest roots to the line $\re s=n$, which means that $R_{\Delta_{\conj{\partial}}}=n-p-q$.
The verification is left to the interested reader.

\bibliography{myrefs}

\end{document}